\newcommand\strong[1]{{\bfseries #1}}
\DeclareMathSymbol{\theta}{\mathord}{letters}{"23}
\DeclareMathSymbol{\vartheta}{\mathord}{letters}{"12}
\DeclareMathSymbol{\phi}{\mathord}{letters}{"27}
\DeclareMathSymbol{\varphi}{\mathord}{letters}{"1E}
\newcommand\CC{\mathbb{C}}
\newcommand\NN{\mathbb{N}}
\newcommand\Db{\mathcal{D}b}
\newcommand\dbar{\bar\partial}
\newcommand\dd{\partial}
\newcommand\ra{\rightarrow\nobreak}
\newcommand\ab{\hbox{$(a,b)$-mod}\-ule}
\newcommand\CCb{\mathbb{C}[[b]]}
\newcommand\Ead{\breve{E}^*} % l'adjoint d'un (a,b)-module
\newcommand\otimesab{\otimes_{(a,b)}}
\DeclareMathOperator{\diff}{d}
\DeclareMathOperator{\dz}{dz}
\DeclareMathOperator{\df}{df}
\DeclareMathOperator{\Ker}{Ker}
\DeclareMathOperator{\im}{Im}
\DeclareMathOperator{\Hom}{Hom}
\newtheorem{theorem}{Theorem}
\newtheorem{proposition}[theorem]{Proposition}
\newtheorem{lemma}[theorem]{Lemma}
\newtheorem{definition}[theorem]{Definition}
\newtheorem{corollary}[theorem]{Corollary}
\theoremstyle{definition}
\newtheorem{example}[theorem]{Example}
\theoremstyle{remark}
\newtheorem{remark}[theorem]{Remark}
\title{Hermitian $(a,b)$-modules and Saito's \hbox{``higher residue pairings''}}
\author{Piotr P. Karwasz\thanks{Institut Élie Cartan Nancy UMR 7502\newline
Nancy-Université, B.P. 70230, 54506 Vandœuvre-lès-Nancy, France\newline
E-mail: \texttt{piotr.karwasz@iecn.u-nancy.fr}}}
\begin{document}
\maketitle
\begin{abstract}
  Following the work of Daniel~\textsc{Barlet} (\cite{barlet2}) and Ridha~\textsc{Belgrade} (\cite{belgrade}) the aim of
  this article is the study of the existence of $(a,b)$-hermitian forms on regular {\ab}s. We show that every regular
  {\ab} $E$ with a non-degenerate bilinear form can be written in an unique way as a direct sum of {\ab}s $E_i$ that
  admit either an $(a,b)$-hermitian or an $(a,b)$-anti-hermitian form or both; all three cases are equally possible with
  explicit examples.
  
  As an application we extend the result in \cite{belgrade} on the existence for all {\ab}s $E$ associated with the
  Brieskorn module of a holomorphic function with an isolated singularity, of an $(a,b)$-bilinear non degenerate form on
  $E$. We show that with a small transformation Belgrade's form can be considered $(a,b)$-hermitian and that the result
  satisfies the axioms of Kyoji Saito's ``higher residue pairings''.

  \textbf{Mathematics Subject Classification (2010):} 32S25, 32S40, 32S50
\end{abstract}

\section{Introduction}

In this article we will study the self-duality properties of {\ab}s and more precisely the conditions under which they admit
a nondegenerate hermitian form. As such we wish to provide the reader with a short introduction to the theory of {\ab}s.

The {\ab}s were introduced by D.~\textsc{Barlet} in \cite{barlet1} as a formal completion of the Brieskorn module
(\cite{brieskorn}) \[
D:=\frac{\Omega_0^{n+1}}{\df\wedge\diff \Omega_0^{n-1}} \] associated to a holomorphic function $f:\CC^{n+1}\ra\CC$ with
an isolated singularity at the origin, where we denote by $\Omega_0^p$ the germs of holomorphic $p$-forms in $0$.

We wish to recall recall briefly the basic results about {\ab}s and refer the reader to the articles \cite{barlet1} and
\cite{barlet2} for further details.

\begin{definition}
  \label{def:ab-module}
  Let $\CC[[b]]$ be the ring of formal series in the variable $b$.  An \textbf{\hbox{$\mathbf{(a,b)}$-mod}\-ule} is an
  algebraic structure composed by a free {$\CC[[b]]$-mod}\-ule $E$ of finite rank and a {$\CC$-li}\-near application
  $a:E\ra E$ that satisfies the commutation relation
  \begin{equation}
    \label{eqn:abba}
    ab - ba = b^2,
  \end{equation}
  where $b:E\ra E$ is the multiplication by the element $b\in\CC[[b]]$.
\end{definition}

For a complex number $\lambda\in \CC$ and an {\ab} $E$, we call \textbf{monomial of type $(\lambda,0)$}, an element
$x\in E$ that satifies to the relation $ax=\lambda bx$. The simplest {\ab}s are those generated over $\CCb$ by a
monomial $e_{\lambda}$ of type $(\lambda,0)$. These modules are called \textbf{elementary} and noted $E_\lambda$.

Given an {\ab} $E$, a sub-$\CCb$-module $F$ of $E$ closed to the multiplication by $a$ is called sub-$(a,b)$-modules
Since the quotient of an {\ab} $E$ by a sub-$(a,b)$-module $F$ is not necessarily $b$-torsion free, a sub-$(a,b)$-module
$F$ of $E$ will be called \textbf{normal}, if $E/F$ is free and hence have an induced {\ab} structure.

The {\ab}s associated to a Brieskorn module are all \strong{regular}, i.e. they are sub-$(a,b)$-modules an {\ab} $E$
satisfying $aE\subset bE$ (a \strong{simple-pole} {\ab}). The composition series of regular {\ab} satisfy the following
property:
\begin{proposition}
  Let $E$ be a regular {\ab}, then all its composition series are of the form:
  \[
  0 = E_0 \subsetneq \dots \subsetneq E_{n-1} \subsetneq E_n = E,
  \]
  with $E_i/E_{i-1}$ elementary {\ab}s $E_\lambda$.
\end{proposition}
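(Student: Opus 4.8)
The plan is to prove this by induction on the rank of $E$ over $\CCb$, establishing two facts simultaneously: that a composition series exists, and that the successive quotients are elementary modules $E_\lambda$. The base case (rank zero) is trivial. For the inductive step, the key is to produce a \emph{normal} sub-$(a,b)$-module $E_{n-1} \subsetneq E$ of corank one whose quotient $E/E_{n-1}$ is elementary; then the induction hypothesis applied to $E_{n-1}$ (which is again regular, being a sub-$(a,b)$-module of the same simple-pole module $E$) furnishes the rest of the series.

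First I would exploit regularity to locate a monomial. Since $E$ is regular, it embeds in a simple-pole module with $aE \subset bE$, so the operator $a/b$ (i.e. $b^{-1}a$) acts on $E/bE$, which is a finite-dimensional $\CC$-vector space of dimension equal to the rank. This endomorphism has an eigenvalue $\lambda \in \CC$; lifting an eigenvector gives an element $e \in E$, $e \notin bE$, with $ae \equiv \lambda b e \pmod{b^2 E}$. The hard part is to upgrade this approximate relation to an exact monomial: I would look for a correction $x = e + b u$ with $u \in E$ such that $ax = \lambda b x$ exactly. Expanding using the commutation relation $ab - ba = b^2$ turns this into a fixed-point equation for $u$ that can be solved order by order in $b$, the convergence being guaranteed by $b$-adic completeness of $\CCb$. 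This produces a genuine monomial $x$ of type $(\lambda,0)$ generating an elementary submodule $E_\lambda \subset E$.

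Next I would pass to a rank-one elementary \emph{quotient} rather than a submodule, since the proposition's filtration has elementary \emph{graded pieces} on top. Dualizing, or equivalently observing that the same eigenvalue argument applied to the transpose of $a/b$ on $(E/bE)^*$ yields a surjection $E \twoheadrightarrow E_\mu$ onto an elementary module; I would let $E_{n-1}$ be its kernel. The point requiring care is \textbf{normality}: $E_{n-1}$ must be such that $E/E_{n-1}$ is $b$-torsion free, which holds precisely because the quotient is the free module $E_\mu$ by construction. Verifying that $E_{n-1}$ is itself a sub-$(a,b)$-module (closed under $a$) and free over $\CCb$ of rank $n-1$ is then routine.

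The main obstacle, as indicated above, is the passage from the approximate eigenrelation modulo $b^2E$ to an exact monomial relation: one must show the iterative correction converges in the $b$-adic topology and does not destroy the property $x \notin bE$. A secondary subtlety is confirming that the strict inclusions in the series are genuinely of corank one at each stage and that every quotient lands on an \emph{elementary} $E_\lambda$ (not merely a regular rank-one module), which again reduces to the monomial construction applied at each step. Once the existence of one such series is in hand, the claim that \emph{all} composition series have this shape follows from the Jordan--Hölder principle for modules of finite length, applied in the category of {\ab}s, since the simple objects of this category among regular modules are exactly the elementary modules $E_\lambda$.
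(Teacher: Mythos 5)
The paper itself offers no proof of this proposition: it is recalled as background from \cite{barlet1}, so there is no in-house argument to set yours against. Judged on its own, your overall architecture (induct on the rank after producing one elementary subquotient, then invoke Jordan--H\"older) is the standard one, but the step you yourself flag as ``the main obstacle'' hides a genuine gap, not a technicality. The assertion that the fixed-point equation for the correction $x=e+bu$ ``can be solved order by order in $b$, the convergence being guaranteed by $b$-adic completeness'' is false for a general eigenvalue $\lambda$ of the residue. Writing $ae=\lambda be+b^{2}v$, the equation for $u$ becomes $(b^{-1}a-(\lambda-1))u=-v$, and on the graded piece $b^{k}E/b^{k+1}E$ the operator $b^{-1}a$ acts as the residue shifted by $k$; solvability at order $k$ is therefore guaranteed only when $\lambda-1-k$ is \emph{not} an eigenvalue of the residue. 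Concretely, for the simple-pole module with $ae_{1}=0$ and $ae_{2}=be_{2}+b^{2}e_{1}$, the residue has eigenvalues $0$ and $1$, yet a direct computation shows every monomial of type $(1,0)$ lies in $bE$: the iteration for $\lambda=1$ gets stuck at the very first step. The missing idea is exactly the one the paper deploys later for $V_{\lambda_{min}}$: one must choose $\lambda$ minimal in its class modulo $\ZZ$ among the eigenvalues, for which the obstruction vanishes at every order; completeness alone proves nothing.

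Two secondary points. First, for a regular but non-simple-pole $E$ the operator $b^{-1}a$ does not act on $E/bE$ at all; you must run the eigenvalue argument on the ambient simple-pole module (or the saturation) and then push the monomial back into $E$ by multiplying by a power of $b$, which shifts $\lambda$ by a positive integer --- harmless, but it has to be said. Second, your detour through the dual to manufacture a corank-one normal submodule is both riskier (you would need $E^{*}$ to be regular, or at least to contain monomials) and unnecessary: once you have a monomial $x\notin bE$ of type $(\lambda,0)$, the submodule $\CCb x\simeq E_{\lambda}$ is automatically normal (if $bw=S(b)x$ with $S(0)\neq 0$ then $x\in bE$, a contradiction, so $S\in b\CCb$ and $w\in\CCb x$), and you can induct on the regular quotient $E/\CCb x$ of rank $n-1$, building the series from the bottom up. The concluding Jordan--H\"older step is fine, but note that the identification of the simple objects with the $E_{\lambda}$ is not an independent input: it reduces to the same existence statement you are trying to prove.
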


As proven in \cite{barlet1}, the quotients of two composition series of an {\ab} $E$ are not unique, even if we ignore
consider the permutations of the quotients.

\section{The {\ab}s and their duality}

The dual and bi-dual structure on {\ab}s where first introduced in \cite{barlet2} and \cite{belgrade} and then expanded
in our thesis (cf. \cite{mythesis}).  We will therefore begin by giving a formal definition of the duality structures we
work with.

In the spirit of the category theory we will define an \textbf{\hbox{$\mathbf{(a,b)}$-mor}\-phism} as an application
$\phi: E \rightarrow F$ between two {\ab}s $E$ and $F$, which is a morphism of the underlying \hbox{$\CCb$-mod}\-ules
and respects the \hbox{$a$-struc}\-ture $\phi(ax) = a\phi(x)$, for any element $x\in E$. We will call $\phi$ an
\textbf{isomorphism} (resp.  \textbf{endomorphism}) of {\ab}s if it is bijective (resp. $E$ = $F$).

\subsection{$(a,b)$-linear maps and dual {\ab}s}

Let $E$ and $F$ be two {\ab}s. As defined by D.~Barlet in \cite{barlet2}, the \hbox{$\CCb$-mod}\-ule $\Hom_{\CCb}\left(
E,F \right)$ of $\CCb$-linear maps from $E$ to $F$ has a natural structure of {\ab} provided by an operator $\Lambda$
that satisfies
\begin{equation}
  \label{eq:def_hom} \left( \Lambda \phi \right) (x) = a_F\left( \phi(x) \right) - \phi\left( a_Ex \right),
\end{equation}
where $\phi\in\Hom_{\CCb}\left( E,F \right)$, $x$ is an element of $E$ and $a_E$ and $a_F$ are the $a$-structures of $E$
and $F$ respectively. We use for this {\ab} the notation $\Hom_{(a,b)}\left( E,F \right)$.

For notation's sake we will denote $a_E$, $a_F$ and $\Lambda$ all by the letter $a$ and to avoid the confusion that such
a notation could pose we should read the expression
\[
a\cdot \phi(x)
\]
as $\left( \Lambda \phi \right)(x)$, whereas the expression $a_E\left( \phi(x) \right)$ will keep the conventional
notation
\[
a\phi(x).
\]
We will therefore rewrite the equation \ref{eq:def_hom} as: $a\cdot \phi(x) = a\phi(x) - \phi(ax)$.

By choosing $E_0$ for the codomain of the morphisms, we can give the following definition:

\begin{definition}[Barlet]
  \label{def:dual_ab-module} Let $E$ be an {\ab} and $E_0$ the elementary {\ab} of parameter~$0$, then we call the
  module
  \[
  \Hom_{(a,b)}\left( E, E_0 \right)
  \]
  the \textbf{dual {\ab}} of $E$ and note it by $E^*$.
\end{definition}

\begin{remark}
  When considering only the $b$-structure of~$E$, the $\CCb$-module $E^*$ corresponds exactly to the definition of dual
  of a $\CCb$-module, since $E_0= \CCb e_0$, with $ae_0 =0$.
\end{remark}
 
The duality functor ${}^*$ is exact (cf. \cite{barlet2}).

\subsection{Conjugate {\ab}}

In~\cite{belgrade} R.~\textsc{Belgrade} uses another definition of dual {\ab} which is not equivalent to the one of
D.~\textsc{Barlet}. In order to be able to express on concept in terms of the other the other, we will introduce an
operation that exchanges the signs of both $a$ and $b$, whose behaviour is similar to that of conjugation in the complex
field.

As in the case of the complex field $\CC$, the ring of formal series $\CCb$ also admits a rather natural involution
\begin{align*}
  \breve{} : \CCb &\ra \CCb\\
  S(b) &\mapsto \breve{S}(b) = S(-b),
\end{align*}
where $S(b)\in\CCb$. This remark allows us to define the conjugate of an {\ab} in the same way as one defines the
conjugate of a complex vector space.

\begin{definition}
  Let $E$ be an {\ab}. We call \textbf{$(a,b)$-conjugate} of~$E$ and note it $\breve{E}$ the set $E$ itself, endowed
  with an $a$- and~$b$-structure given by:
  \begin{align*}
  a\cdot_{\breve{E}}v &= -a\cdot_Ev\\
  b\cdot_{\breve{E}}v &= -b\cdot_Ev,
  \end{align*}
  where $\cdot_{\breve{E}}$ and $\cdot_E$ denote the $(a,b)$-structure of $\breve{E}$ and~$E$ respectively.

  Since we change signs of both $a$ and~$b$, the formula $ab - ba = b^2$ is still satisfied.
\end{definition}

\begin{remark}
  An {\ab} is not necessarily isomorphic to its conjugate. We can take, for example, the {\ab} of rank~$2$ generated by
  two elements $x$ and $y$ that satisfy:
  \begin{align*}
  ax & = \lambda bx\\
  ay & = \lambda by + \left( 1 + \alpha b \right)x,
  \end{align*}
  where $\lambda$ and $\alpha\in\CC$ and $\alpha\neq 0$. Its conjugate satisfies
  \begin{align*}
    ax & = \lambda bx\\
    ay & = \lambda by + \left( 1 - \alpha b\right)x,
  \end{align*}
  and the classification of rank~$2$ regular {\ab}s, given in~\cite{barlet1} implies that the two modules are not
  isomorphic.
\end{remark}

One can see immediately that for an {\ab} $E$ the conjugate of the conjugate $\left( \breve{E} \right)\breve{}$ is the
{\ab} itself.

On the other hand let $E$ and~$F$ be {\ab}s and $\phi$ a morphism between $E$ and~$F$. Since $\phi(-ax)=-a\phi(x)$ and
$\phi(-bx)= -b\phi(x)$, for all $x\in E$ the application $\phi$ is also a morphism between the conjugates $\breve{E}$
and $\breve{F}$. We call conjugation functor the functor that associates to every {\ab} its conjugate and to every
morphism, the morphism itself. Such a functor is exact.

For an {\ab} module $E$ we will be interested especially in a particular kind of conjugate, the conjugate of the dual,
which we call \textbf{adjoint} {\ab} and note with $\Ead$.

\subsection{Bilinear forms and tensor product}

In order to define $\Hom_{(a,b)}\left( E,F \right)$ we used the equivalent object for its underlying $b$-structure. We
can proceed in a similar way to obtain the concept of $(a,b)$-bilinear maps:
\begin{definition}
  Let $E$, $F$ and~$G$ be two {\ab}s. An \textbf{$(a,b)$-bilinear map} on $E\times F$ is a $\CCb$-linear map $\Phi$,
  \[
  \Phi: E\times F \ra G,
  \]
  that satisfies the following property:
  \[
  a\Phi(x,y) = \Phi(ax,y) + \Phi(x,ay).
  \]
\end{definition}

\begin{remark}
  \label{rmk:bilinear_linear}
  If $\Phi$ is an $(a,b)$-bilinear map on $E\times F$ with values in $G$ and $v$ is an element of $E$:
  \[
  \Phi_v:= \Phi(v,\cdot): w \mapsto \Phi(v,w)\qquad w\in F
  \]
  is not necessarily an $(a,b)$-morphism. However the map $\pi:v\mapsto \Phi_v$ is an $(a,b)$-morphism between $E$
  and~$\Hom_{(a,b)}\left( F, G \right)$.  We have in fact:
  \[
  \pi(av) (x) = \Phi_{av} (x) = a\Phi_v (x) - \Phi_v (ax) = a\cdot\Phi_v(x)
  = a\pi(v).\]
\end{remark}

Inherently linked to the concept of $(a,b)$-bilinear maps is that of tensor products, that allows a more practical
manipulation of these objects.

\begin{definition}
  Let $E$ and~$F$ be two {\ab}s. We call $(a,b)$-tensor product of $E$
  and~$F$ and write it as $E\otimes_{(a,b)}F$ the $\CCb$-module
  \[
  E\otimes_{\CCb} F
  \]
  endowed with an $a$-structure defined as follows:
  \[
  a\left( v\otimes w \right) = \left( av \right)\otimes w + v\otimes \left(
  aw \right)
  \]
  for every $v\in E$ and $w\in F$.
\end{definition}

The $a$-structure we gave on $E\otimesab F$ is well defined. We have in
fact:
\begin{multline*}
  a\left( bv \otimes w \right)= a(bv) \otimes w + bv \otimes a(w) = ba(v)
  \otimes w + b^2v \otimes w + v \otimes ba(w) =\\
  a(v) \otimes bw + v\otimes a(bw) = a\left( v\otimes bw \right),
\end{multline*}
for each $v\in E$, $w\in F$ and it satisfies $ab - ba = b^2$:
\begin{multline*}
  a\left( bv \otimes w \right) - ba\left( v\otimes w \right) = ba(v)
  \otimes w + b^2v \otimes w + bv \otimes a(w) \\
  - ba(v) \otimes w -
  bv\otimes a(w) = b^2\left( v\otimes w \right).
\end{multline*}

We can easily verify that the tensor product defined satisfies the usual universal property: there exists a bilinear map
\[
\Phi: E \times F \ra E \otimesab F,
\]
such that for every bilinear map $\Psi$ on $E \times F$ with values in a third {\ab} $G$, there exists an unique
$(a,b)$-morphism $\tilde\Psi$ from $E \otimesab F$ into $G$ that makes the following diagram commutative:
\[\xymatrix{
E \times F \ar[d]^\Phi \ar[r]^\Psi & G\\
E \otimesab F \ar@{-->}[ur]^{\tilde\Psi}.
}\]
We can take as $\Phi$ the natural application
\begin{align*}
  \Phi: E \times F &\ra E \otimesab F\\
  (v,w) &\mapsto v \otimesab w
\end{align*}
and define $\tilde\Psi$ as:
\begin{align*}
  \tilde\Psi: E \otimesab F &\ra G\\
  v \otimesab w & \mapsto \Psi(v,w)
\end{align*}
The unicity of $\tilde\Psi$ follows directly from the universal property of the tensor product of $\CCb$-modules. We
need only to verify that the map is $a$-linear. We will do it on the generators $v \otimesab w$ of $E \otimesab F$, for
$v \in E$ and $w \in F$:
\begin{multline*}
\tilde\Psi\left( a(v \otimesab w) \right) = \tilde\Psi\left( (av) \otimesab
w + v \otimesab (aw) \right) = \\
\Psi(av,w) + \Psi(v, aw) = a\Psi(v,w) =
a\tilde\Psi(v \otimesab w).
\end{multline*}

Basing ourselves on the properties of the tensor product of $\CCb$-modules, we can derive in a similar manner the other
properties of the equivalent object in the theory {\ab}s.

\begin{lemma}
  \label{lem:product_prop}
  Let $E$, $F$ and~$G$ be three {\ab}s, then the tensor product verifies the following properties:
  \begin{enumerate}
  \item \[E\otimes_{(a,b)} F \simeq F \otimes_{(a,b)} E,\]
  \item \[\left( E \otimes_{(a,b)} F
    \right) \otimes_{(a,b)} G \simeq E \otimes_{(a,b)} \left( F
    \otimes_{(a,b)} G \right),\]
  \item \[\left( E\otimesab F \right)^* \simeq E^* \otimesab F^*,\]
  \item \[\left( E\otimesab F \right)\breve{} \simeq \breve{E} \otimesab
    \breve{F},\]
  \item The $(a,b)$-morphism
    \begin{align*}
      \Phi:E &\ra E \otimesab E_0\\
      v &\mapsto v\otimesab e_0
    \end{align*}
    where $e_0$ is a generator of the elementary {\ab} $E_0$, is an
    isomorphism.
  \item \label{tensor_hom}
    We have the following isomorphism of {\ab}s:
    \begin{align*}
    E^* \otimesab F &\ra \Hom_{(a,b)}\left( E, F \otimesab E_0\right)\\
    \phi \otimesab y &\mapsto \left( \Phi: x \mapsto  y \otimesab
    \phi(x)\right),
    \end{align*}
    where $\phi\in E^*$, $x\in E$ and~$y\in F$.
\end{enumerate}
\end{lemma}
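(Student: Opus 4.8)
The plan is to obtain every one of the six isomorphisms by transporting the corresponding classical isomorphism of the underlying $\CCb$-modules (which are all free of finite rank, so the usual commutativity, associativity, and duality/adjunction isomorphisms for tensor products are available) and then checking that the transported $\CCb$-linear bijection is in addition compatible with the two $a$-structures, i.e.\ is an $(a,b)$-morphism. Since such a map is already a $\CCb$-isomorphism, this compatibility upgrades it automatically to an isomorphism of {\ab}s, and its inverse is then an $(a,b)$-morphism as well. Thus for each item the only real content is an $a$-linearity verification, which I would carry out on generators $v\otimesab w$ (resp.\ on generators $\phi\otimesab y$, evaluated at a generator $x$) using the defining formula $a(v\otimesab w)=(av)\otimesab w+v\otimesab(aw)$.

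For (i), (ii) and (iv) the verification is immediate from the symmetry of this defining formula. For (i) I take $v\otimesab w\mapsto w\otimesab v$; for (ii) I take $(v\otimesab w)\otimesab u\mapsto v\otimesab(w\otimesab u)$, both sides expanding to $(av)\otimesab w\otimesab u+v\otimesab(aw)\otimesab u+v\otimesab w\otimesab(au)$. For (iv) the underlying map is the identity on the common abelian group $E\otimes_{\CCb}F$ (the relation $bv\otimes w=v\otimes bw$ is insensitive to the sign change $b\mapsto-b$), and both the $a$- and the $b$-structures of $(E\otimesab F)\breve{}$ and of $\breve E\otimesab\breve F$ equal $-(a_E\otimes 1+1\otimes a_F)$ and $-b$ respectively, so they coincide. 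For (v) the map $v\mapsto v\otimesab e_0$ is the standard identification $E\simeq E\otimes_{\CCb}\CCb e_0$ on the $b$-level, hence a $\CCb$-isomorphism, and $a$-linearity is the one-line computation $a(v\otimesab e_0)=(av)\otimesab e_0+v\otimesab(ae_0)=(av)\otimesab e_0$, using $ae_0=0$.

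The hard part will be (iii) and (vi), since these involve the dual and hence the operator $\Lambda$, $(\Lambda\phi)(x)=a\phi(x)-\phi(ax)$. The clean way to organise both is to prove (v) first and to record that the isomorphism $m\colon E_0\otimesab E_0\to E_0$ of (v) (taken with $E=E_0$), being an $(a,b)$-morphism, endows $E_0$ with a product $\alpha\cdot\beta:=m(\alpha\otimesab\beta)$ satisfying the Leibniz rule $a(\alpha\cdot\beta)=(a\alpha)\cdot\beta+\alpha\cdot(a\beta)$. For (iii) I define, on generators, $\phi\otimesab\psi\mapsto\bigl(x\otimesab y\mapsto\phi(x)\cdot\psi(y)\bigr)$; on the $b$-level this is the classical isomorphism for free finite-rank modules, $\Hom_{\CCb}(E,E_0)\otimes_{\CCb}\Hom_{\CCb}(F,E_0)\simeq\Hom_{\CCb}(E\otimes_{\CCb}F,E_0\otimes_{\CCb}E_0)$, composed with $m$. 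The $a$-linearity then follows from the Leibniz rule: evaluating the $\Lambda$-operator of $(E\otimesab F)^*$ on the image and using $a(x\otimesab y)=ax\otimesab y+x\otimesab ay$, the four terms regroup exactly into $(\Lambda\phi)(x)\cdot\psi(y)+\phi(x)\cdot(\Lambda\psi)(y)$, which is the image of $a(\phi\otimesab\psi)=(\Lambda\phi)\otimesab\psi+\phi\otimesab(\Lambda\psi)$.

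For (vi), with the map $\phi\otimesab y\mapsto\bigl(x\mapsto y\otimesab\phi(x)\bigr)$ already a $\CCb$-isomorphism by the tensor--hom adjunction for free finite-rank modules, the same bookkeeping works: comparing $a\cdot\bigl(S(\phi\otimesab y)\bigr)(x)=a\bigl(y\otimesab\phi(x)\bigr)-y\otimesab\phi(ax)$ with the image of $a(\phi\otimesab y)$, the $\phi(ax)$ term cancels against the part of $a\bigl(y\otimesab\phi(x)\bigr)$ coming from the $E_0$-factor, leaving $ay\otimesab\phi(x)+y\otimesab(\Lambda\phi)(x)$, as required; here again $ae_0=0$ is what makes the $E_0$-factor behave. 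The only points where I would be most careful are checking that these two base maps are genuine $\CCb$-isomorphisms (which is exactly where finiteness of the rank is used) and that the regroupings above account for every term.
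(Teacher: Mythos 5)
Your proposal is correct and follows exactly the route the paper itself indicates: the paper gives no written proof of this lemma, merely remarking that the properties are derived from those of the tensor product of $\CCb$-modules, and your plan (transport the classical free-finite-rank isomorphisms and verify $a$-linearity on generators, using the product $E_0\otimesab E_0\simeq E_0$ and the operator $\Lambda$ for items (iii) and (vi)) carries out precisely that program. The only cosmetic quibble is in (vi), where the $-y\otimesab\phi(ax)$ term does not literally cancel but combines with $y\otimesab a\phi(x)$ to form $y\otimesab(\Lambda\phi)(x)$ -- which is what your final displayed identity already says.
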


\begin{remark}
  In \cite{belgrade}, R.~\textsc{Belgrade} defines the concept of \textbf{$\delta$-dual} of an {\ab} $E$:

  \begin{definition}[Belgrade] Let $E$ be an {\ab} and $\delta\in\CC$, then we call the $\delta$-dual of $E$ the set
    $\Hom_{\CCb}(E,E_\delta)$ with the $(a,b)$-structure defined as follow:
    \begin{align}
      [a\cdot \phi](x) &= \phi(ax) - a\phi(x)\\
      [b\cdot \phi](x) &= -b\phi(x) = \phi(-bx)
    \end{align}
    \label{def:belgrade}
  \end{definition}

  From property (v) and (vi) of the previous lemma we obtain the isomorphism $E^*\otimesab F \simeq \Hom_{(a,b)}\left(
  E,F \right)$, which in turn let us find an alternative description of the $\delta$-dual of an {\ab}. In fact from
  definition~\ref{def:belgrade} it is easy to show that the $\delta$-dual of an {\ab} in Belgrade's terminology is the
  module
  \[
  \Hom_{(a,b)}\left( \check{E}, E_\delta \right),
  \]
  which in turn can be rewritten as $\Ead \otimesab E_\delta$.
\end{remark}

We will call an $(a,b)$-bilinear application on $E \times F$ with values in $G$, an \textbf{$(a,b)$-bilinear form} if
$G=E_0$. In the rest of this section we will deal with the existence of nondegenerate hermitian forms on {\ab}s. We will
need therefore the following definitions.

\begin{definition}
  Let $E$ and~$F$ be two {\ab}s and $\Phi$ a bilinear form on $E\times F$.  We say that $\Phi$ is
  \strong{nondegenerate}, if the $(a,b)$-morphism $v\mapsto \Phi(v,\cdot)$ is an isomorphism of $E$ with $F^*$.
\end{definition}

\begin{definition}
  Let $E$ be an {\ab}. A \strong{sesquilinear} form on $E$ is a bilinear form on $E \times \check{E}$.
\end{definition}

\begin{remark}
  \label{rmk:forcibly_self-adjoint}
  Since a nondegenerate sequilinear form on an {\ab} $E$ induces an isomorphism to its adjoint $\Ead$ it follows that
  not all {\ab}s are self-adjoint (e.g. $E_\lambda$ with $\lambda \neq 0$ is not) not every {\ab} admits a nondegenerate
  sesquilinear form.
\end{remark}

Consider now a sesquilinear form $\Phi$ on $E$. By applying to it the conjugate functor we obtain a bilinear map
$\check\Phi$ on $\check{E} \times E$ with values in $\check{E_0}$. If we fix an isomorphism of $\check{E_0}$ with $E_0$,
we can consider $\check\Phi$ as a sequilinear form on $\check{E}$. Under this assumption, we define $(a,b)$-hermitian
and anti-$(a,b)$-hermitian forms as:

\begin{definition}
  Let $E$ be an {\ab}. An $(a,b)$-sesquilinear form $H$ on $E$ is called \textbf{$(a,b)$-hermitian} (respectively
  \textbf{anti-$(a,b)$-hermitian}) if it satisfies:
  \[
  H(v,w) = \check{H}(w,v),
  \]
  (respectively
  \[
  \left.H(v,w) = -\breve{H}(w,v)\right).
  \]
  where $v\in E$, $w \in \breve{E}$ and $\breve{H}$ is the sesquilinear form on $\check{E}$ defined above.
\end{definition}

We have already shown that in order to admit a nondegenerate sesquilinear form, an {\ab} must be self-adjoint. We will
refine the concept of self-adjoint by defining:

\begin{definition}
  Let $E$ be a self-adjoint {\ab}. We say that $E$ is \strong{hermitian} (resp.  \strong{anti-hermitian}), if it admits a
  nondegenerate hermitian (resp.  anti-hermitian) form.
\end{definition}

Let $E$ be an {\ab} endowed with an hermitian form and let $\Phi: E \ra \Ead$ be the linear form associated to the
hermitian form via the remark~\ref{rmk:bilinear_linear}.

We can translate the hermitian property into the identity between $\Phi$ and its adjoint $\check{\Phi}^*: E \ra \Ead$.
In fact while $\Phi(v)$, for $v\in E$ is the linear map:
\[
\phi: w \mapsto \Phi(v,w),\qquad w\in \check{E},
\]
the adjoint map $\check{\Phi}^*$ sends an element $v\in E = E^{**}$ to the
map:
\[
\phi: w \mapsto v\left( \check{\Phi}(w, \cdot) \right) = \check{\Phi}(w,v).
\]

We will use this formulation extensively in the following section.

Note moreover that given an isomorphism $\Phi$ from an {\ab} $E$ and its $\delta$-dual $\Ead\otimesab E_\delta$ is
equivalent to specifying an isomorphism between $E\otimesab E_{-\delta/2}$ and
\[
\Ead\otimesab E_\delta \otimesab E_{-\delta/2} \simeq \Ead\otimesab E_{\delta/2}.
\]
Since we have
\[
\widecheck{\left( E\otimesab E_{-\delta/2} \right)}^* \simeq \Ead \otimesab
\check{E}^*_{-\delta/2} \simeq \Ead \otimesab E_{\delta/2},
\]
we can identify an isomorphism of $E$ with its $\delta$-dual with an
hermitian form on $E\otimesab E_{-\delta/2}$.

\section{Existence of hermitian forms}

We will analyze in this section the existence of nondegenerate hermitian forms on regular {\ab}s. We will proceed in two
steps: in the first two subsections we will reduce ourselves to a subclass of {\ab}s called indecomposable {\ab}s and
show that every regular {\ab} can be decomposed into the direct sum of indecomposable ones and that this decomposition
is unique (theorem \ref{krull-schmidt}).

In the last subsection we will show that a self-adjoint {\ab} which is indecomposable admits at least an hermitian or
anti-hermitian form. The result is optimal since there are examples that admit only an hermitian or only an
anti-hermitian form (theorem \ref{symmetric_or_antisymmetric}).

\subsection{Indecomposable {\ab}s}

\begin{definition}
  Let $E$ be an {\ab}. We say that $E$ is \strong{indecomposable} if it cannot be written as direct sum $F\oplus G$ of
  non zero {\ab}s.
\end{definition}

Whenever we decompose an {\ab} $E$ into a direct sum of two {\ab}s $E = F \oplus G$ the rank of the components is
strictly less than the rank of $E$, hence by proceeding by induction for every {\ab} $E$ we can find a decomposition
into a sum of indecomposable {\ab}s:
\[
E = \bigoplus_{i=1}^r F_i,
\]
where $r\in\NN$ and $F_i$ are indecomposable sub-{\ab}s.

We are interested in the question whether the isomorphism classes of the $F_i$ are unique and do not depend upon the
decomposition. We will need to this purpose an introductory result:

\begin{proposition}
  \label{prop:bijective_or_nilpotent}
  Let $E$ be a regular and indecomposable {\ab}. Then every endomorphism of $E$ is either bijective or nilpotent.
\end{proposition}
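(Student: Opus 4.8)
The statement is a classic Fitting-type dichotomy, and the natural plan is to exploit the fact that $E$ is a free $\CCb$-module of finite rank together with indecomposability. Let $\phi$ be an endomorphism of the regular $(a,b)$-module $E$. The plan is to study the descending and ascending chains associated to the powers of $\phi$, namely the images $\im \phi^n$ and the kernels $\Ker \phi^n$, and to show that for large $n$ the module $E$ splits as a direct sum $\Ker \phi^n \oplus \im \phi^n$ of sub-$(a,b)$-modules. Indecomposability then forces one of the two summands to vanish, which gives precisely the alternative: either $\phi$ is nilpotent (if $\im \phi^n = 0$) or $\phi$ is bijective (if $\Ker \phi^n = 0$).

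**Carrying out the Fitting decomposition.**

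First I would check that $\Ker\phi$ and $\im\phi$ are sub-$(a,b)$-modules: since $\phi$ is an $(a,b)$-morphism it commutes with both $a$ and $b$, so both subsets are stable under the $(a,b)$-structure, and the same holds for every power $\phi^n$. Next, because $E$ is a free $\CCb$-module of finite rank and $\CCb$ is Noetherian (indeed a local ring), the ascending chain $\Ker\phi\subseteq\Ker\phi^2\subseteq\cdots$ stabilizes, say at $\Ker\phi^n=\Ker\phi^{2n}$. The key algebraic step is then the standard identity
\[
E = \Ker\phi^{n}\oplus\im\phi^{n},
\]
proved by noting that $\phi^{n}$ restricts to an injective, hence surjective, endomorphism of $\im\phi^{n}$ (surjectivity coming from the stabilization of the kernel chain), which gives $\Ker\phi^{n}\cap\im\phi^{n}=0$, and a dimension or rank count over the fraction field of $\CCb$ yields that the sum is all of $E$. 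The one subtlety to verify is that $\im\phi^n$ is a \emph{normal} sub-$(a,b)$-module, i.e.\ that the quotient is $b$-torsion free, so that the direct sum is genuinely a decomposition in the category of $(a,b)$-modules and not merely of $\CCb$-modules; this follows once we know the complementary summand $\Ker\phi^n$ is itself free and the splitting is $\CCb$-linear.

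**Where the difficulty lies.**

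I expect the main obstacle to be precisely this last point: over $\CCb$ one must be careful that taking images and kernels stays within the class of free modules and yields a \emph{normal} submodule, since in general quotients of $(a,b)$-modules need not be $b$-torsion free (as the excerpt emphasizes in its discussion of normality). The cleanest way around this is to work over the principal ideal domain $\CCb$ directly: a submodule of a free module over a PID is free, and the rank additivity $\rg E = \rg\Ker\phi^n + \rg\im\phi^n$ together with the trivial intersection forces the inclusion $\Ker\phi^n\oplus\im\phi^n\hookrightarrow E$ to be an isomorphism onto a direct summand, which automatically gives a $b$-torsion-free quotient. Once the Fitting decomposition is established as a decomposition of $(a,b)$-modules, indecomposability of $E$ gives the conclusion immediately: one of $\Ker\phi^n$, $\im\phi^n$ must be zero, and correspondingly $\phi$ is nilpotent or bijective.
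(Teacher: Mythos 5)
Your overall strategy (Fitting decomposition plus indecomposability) is the same as the paper's, but there is a genuine gap at the crucial step, namely the claim that the full-rank submodule $\Ker\phi^{n}\oplus\im\phi^{n}$ is actually all of $E$. Your two justifications both fail over $\CCb$: first, an \emph{injective} endomorphism of a free $\CCb$-module of finite rank need not be surjective (full-rank proper submodules such as $bE$ exist, unlike in the vector-space setting); second, the rank count $\rg E=\rg\Ker\phi^{n}+\rg\im\phi^{n}$ over the fraction field together with $\Ker\phi^{n}\cap\im\phi^{n}=\{0\}$ only shows that $\Ker\phi^{n}\oplus\im\phi^{n}$ is a free submodule of full rank, which does not force it to equal $E$. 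A telltale sign of the gap is that your argument never uses the regularity of $E$, whereas the statement genuinely needs it.

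The paper closes exactly this hole with a separate result (Proposition~\ref{injective_bijective}): every injective $(a,b)$-endomorphism of a \emph{regular} {\ab} is bijective. Its proof is not formal; it goes by induction on the rank, using the normal sub-{\ab} $V_{\lambda_{\min}}$ spanned by the monomials of minimal type $(\lambda_{\min},0)$, on which any injective endomorphism acts as an injective (hence bijective) $\CC$-linear map of a finite-dimensional space, and then passing to the quotient $E/V_{\lambda_{\min}}$. With that in hand, the stabilized image $I_m=\im\phi^{m}$ satisfies $\phi(I_m)=I_m$, and the identity $x=\phi^{m}(y)+\left(x-\phi^{m}(y)\right)$ with $\phi^{m}(x)=\phi^{2m}(y)$ gives $E=K_m+I_m$ directly, with no rank count. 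By contrast, the normality issue you single out as the main difficulty is the easy part: once $E=F\oplus N$ as sub-$(a,b)$-modules, each summand is automatically normal since the quotient is isomorphic to the (free) complementary summand. To repair your proof you must supply the ``injective implies bijective'' lemma, and that is where the real work and the regularity hypothesis live.
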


The proof of this proposition will need several steps beginning with a definition:

\begin{definition}
Let $E$ be a regular {\ab} and $\lambda\in\CC$. We define
\[
V_{\lambda} = \left\{ \sum F_i| F_i\subset E, F_i\simeq
E_{\lambda} \right\}
\]
to be the sum of all sub-{\ab}s of $E$ isomorphic to $E_{\lambda}$.
\end{definition}

The object $V_\lambda$ is clearly a sub-{\ab}.  We will use $V_\lambda$ as an induction step in the proof of
proposition~\ref{prop:bijective_or_nilpotent}, by choosing a $\lambda$ such that $V_\lambda$ is normal:

\begin{proposition}
Let $E$ be a regular \ab, $\lambda\in\CC$ and:
\[
\lambda_{min} = \inf_j\left\{ \lambda + j| \exists x \in E, ax =
(\lambda + j)bx\right\}
\]
be the minimal $\lambda +j$ such that $E$ contains a monome of type
$(\lambda+j, 0)$.

Then $V_{\lambda_{min}}$ is a normal sub-{\ab} of $E$ isomorphic as {\ab} to the direct sum of a finite number of copies
of $E_{\lambda_{min}}$.
\end{proposition}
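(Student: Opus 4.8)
The plan is to work directly with the ``lowest weight'' monomials. Write $\mu = \lambda_{min}$ and set
\[
W = \{x \in E \mid ax = \mu bx\},
\]
the set of monomials of type $(\mu,0)$ together with $0$; this is a $\CC$-vector subspace of $E$. First I would record that $V_\mu = \CCb\cdot W$: each nonzero $x\in W$ generates a sub-\ab{} $\CCb x\simeq E_\mu$, the map $e_\mu\mapsto x$ being an isomorphism onto its image, and conversely any sub-\ab{} isomorphic to $E_\mu$ is generated by the image $x$ of $e_\mu$, which satisfies $ax=\mu bx$ and hence lies in $W$. I would also check that the infimum defining $\mu$ is finite and attained, which is exactly where regularity enters: embedding $E$ in a simple-pole \ab{} $\widetilde E$ and writing $\Theta=b^{-1}a$, any monomial of type $(\nu,0)$ forces $\nu$ to be a nonnegative integer shift of an eigenvalue of the induced operator on $\widetilde E/b\widetilde E$, so the types lying in $\lambda+\ZZ$ are bounded below and $\mu$ is a genuine minimum.

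The crucial step is the identity $W\cap bE = 0$. Suppose $x\in W$ is nonzero with $x=by$, $y\in E$. Then $ax=\mu bx=\mu b^2 y$, while $ax=a(by)=bay+b^2y$; comparing and using that $E$ is $b$-torsion free (being a free $\CCb$-module) gives $ay=(\mu-1)by$. Thus $y$ is a monomial of type $(\mu-1,0)$ with $\mu-1\in\lambda+\ZZ$ and $\mu-1<\mu$, contradicting the minimality of $\lambda_{min}$. Consequently the reduction map $W\to E/bE$ is injective, so $W$ is finite dimensional, say $\dim_\CC W=d$, and a $\CC$-basis $x_1,\dots,x_d$ of $W$ maps to $\CC$-linearly independent vectors $\bar x_1,\dots,\bar x_d$ in $E/bE$.

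From here the structure follows by routine $b$-adic valuation arguments. The independence of the $\bar x_i$ upgrades to $\CCb$-linear independence of the $x_i$ (from $\sum S_i x_i=0$, factor out the minimal power of $b$ and reduce mod $b$ to contradict independence in $E/bE$), so $V_\mu=\CCb\cdot W=\bigoplus_{i=1}^d \CCb x_i$. Each summand is $a$-stable, since $a(b^k x_i)=(\mu+k)b^{k+1}x_i$, and $e_\mu\mapsto x_i$ defines an \ab{}-isomorphism $E_\mu\xra{\sim}\CCb x_i$; hence $V_\mu\simeq E_\mu^{\oplus d}$. For normality I would take $x\in E$ with $bx\in V_\mu$ and write $bx=\sum_i S_i(b)x_i$. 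Reducing modulo $bE$ annihilates the left-hand side and leaves $\sum_i S_i(0)\bar x_i=0$, so independence forces $S_i(0)=0$, i.e. $S_i=bT_i$; cancelling $b$ by torsion-freeness yields $x=\sum_i T_i x_i\in V_\mu$, so $E/V_\mu$ is $b$-torsion free and $V_\mu$ is normal.

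The main obstacle is the single substantive step $W\cap bE=0$: this is precisely where the minimality of $\lambda_{min}$ is used, converting a hypothetical element of $W\cap bE$ into a forbidden lower-weight monomial. Once this is established, the finiteness of $d$, the direct-sum splitting, the identification with $E_\mu^{\oplus d}$, and the normality are all formal consequences of $b$-torsion-freeness and reduction modulo $b$.
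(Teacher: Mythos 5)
Your proof is correct, and it turns on exactly the same pivot as the paper's: the computation showing that if $x$ is a nonzero monomial of type $(\lambda_{min},0)$ with $x=by$, then $y$ is a monomial of type $(\lambda_{min}-1,0)$, contradicting the minimality of $\lambda_{min}$. Where you genuinely diverge is in how the direct-sum structure of $V_{\lambda_{min}}$ is extracted. The paper first proves that \emph{any} sub-{\ab} $W$ isomorphic to a direct sum of copies of $E_{\lambda_{min}}$ is normal, then takes a maximal such $W$ inside $V_{\lambda_{min}}$ and uses that normality to establish the dichotomy $W\cap F=\{0\}$ or $F\subset W$ for every sub-{\ab} $F\simeq E_{\lambda_{min}}$, whence maximality forces $W=V_{\lambda_{min}}$. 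You instead take $W$ to be the $\CC$-vector space of \emph{all} monomials of type $(\lambda_{min},0)$, observe that $V_{\lambda_{min}}=\CCb\cdot W$, and derive everything from the single identity $W\cap bE=\{0\}$: it injects $W$ into the finite-dimensional space $E/bE$, a $\CC$-basis of $W$ is then $\CCb$-independent by the valuation argument, and both the splitting $V_{\lambda_{min}}\simeq E_{\lambda_{min}}^{\oplus d}$ and the normality of $V_{\lambda_{min}}$ (via $b$-torsion-freeness of the quotient over the discrete valuation ring $\CCb$) follow formally. Your route replaces the maximality argument by linear algebra in $E/bE$, makes the number of copies explicit as $\dim_\CC W$, and also addresses a point the paper leaves to the cited literature, namely that the infimum defining $\lambda_{min}$ is finite and attained; the paper's version yields as a by-product the slightly more general fact that every sub-{\ab} of $E$ isomorphic to a direct sum of copies of $E_{\lambda_{min}}$ is automatically normal. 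Both arguments are sound.
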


\begin{proof}
  We will use two facts.

  First, for every $W \simeq \bigoplus E_{\lambda_{min}}$ sub-{\ab} of $E$, $W$ is normal in $E$.  Let in fact $\{e_i\}$
  be a basis of $W$ with $1 \leq i \leq p$ the rank of $W$.  Suppose by absurd that there exist some $x\in W$ which is
  in $bE$, but not in $bW$.

  By eventually translating $x$ by an element of $bW$, we can assume $x= \sum_{i=1}^p \alpha_ie_i$, $\alpha_i\in\CC$. We
  can easily verify that $ax = \lambda_{min}bx$ but now if $x=by$ we must have $ay = (\lambda_{min} - 1) b y$, and since
  $y \in E$ it contradicts the minimality of $\lambda_{min}$.

  On the other hand we can show that $V_{\lambda_{min}}$ is a direct sum of $E_{\lambda_{min}}$. In fact let $W$ be the
  largest (inclusionwise) direct sum of copies of $E_{\lambda_{min}}$ included in $V_{\lambda_{min}}$.  We remark that
  since $W$ is normal, for any sub-{\ab} $F$ isomorphic to $E_{\lambda_{min}}$ only one of two cases is possible: either
\[
W\cap F = \{0\} \text{ or } F\subset W.
\]

  If $W\cap F\neq \{0\}$, let $e$ be the generator of $F$ and $S(b)b^ne\in W$ with $S(0)\neq 0$, then $S(b)e\in W$ by
  normality and $e=S^{-1}(b)S(b)e\in W$. We have therefore $F\subset W$.

  If $W$ contains every sub-{\ab} isomorphic to $E_{\lambda_{min}}$, then it is equal to $V_{\lambda_{min}}$.
  Otherwise there is an $F$ such that $W\cap F = \{0\}$, hence $W\oplus F$ is still in $V_{\lambda_{min}}$, which
  contradicts the maximality of $W$.
\end{proof}

  We will now use the sub-{\ab} $V_{\lambda_{min}}$ to show the following proposition

\begin{proposition}
\label{injective_bijective}
Let $E$ be a regular {\ab} and $\phi$ an $(a,b)$-morphism between $E$ and itself. Then $\phi$ is bijective if and
only if $\phi$ is injective.
\end{proposition}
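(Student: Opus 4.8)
The plan is to establish the nontrivial implication ``injective $\Rightarrow$ bijective'' (the converse being immediate), by induction on the rank of $E$ over $\CCb$, using the normal sub-{\ab} $V_{\lambda_{min}}$ produced by the preceding proposition as the inductive device. Set $V:=V_{\lambda_{min}}$, so that $V$ is normal and $V\simeq\bigoplus_{i=1}^{p}E_{\lambda_{min}}$; since regularity is stable under normal quotients, $E/V$ is then a regular {\ab} of strictly smaller rank whenever $V\neq E$ (and $V\neq 0$, since any nonzero regular {\ab} contains a monomial by the composition series property). The whole argument rests on two observations: that $\phi$ preserves $V$, and that on $V$ itself injectivity already forces bijectivity.

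First I would check that $\phi(V)\subseteq V$. If $e$ is a generator of one of the summands $E_{\lambda_{min}}$, then $ae=\lambda_{min}be$, whence $a\phi(e)=\phi(ae)=\lambda_{min}b\phi(e)$, so $\phi(e)$ is again a monomial of type $(\lambda_{min},0)$; writing $\phi(e)=b^{k}u$ with $u\notin bE$ when $\phi(e)\neq 0$, the commutation relation~\eqref{eqn:abba} yields $au=(\lambda_{min}-k)bu$, and the minimality of $\lambda_{min}$ in its class modulo $\ZZ$ forces $k=0$. Hence $\phi(e)\notin bE$ generates a copy of $E_{\lambda_{min}}$ and lies in $V$, so by $\CCb$-linearity $\phi(V)\subseteq V$. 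The same computation shows $\Hom_{(a,b)}(E_{\lambda_{min}},E_{\lambda_{min}})=\CC$ (a morphism multiplies the generator by a constant, every $b$-dependence being killed by~\eqref{eqn:abba}), so $\Hom_{(a,b)}(V,V)$ is the matrix algebra $M_{p}(\CC)$ and $\phi|_{V}$ acts through a constant matrix $A$. Thus $\phi|_{V}$ is injective if and only if $A$ is invertible, if and only if $\phi|_{V}$ is bijective; in particular the case $V=E$ is already settled.

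It remains to treat $V\subsetneq E$. Since $\phi|_{V}$ is bijective and $\phi$ is injective, the induced map $\bar\phi$ on $E/V$ is injective: if $\phi(x)\in V$, choose $v\in V$ with $\phi(v)=\phi(x)$ and conclude $x=v\in V$ from $\phi(x-v)=0$. The induction hypothesis, applied to the regular {\ab} $E/V$ of smaller rank, then makes $\bar\phi$ bijective. Surjectivity of $\phi$ follows by the same correction trick: given $y\in E$, lift $\bar y$ through $\bar\phi$ to obtain $x$ with $\phi(x)-y\in V$, then pick $v\in V$ with $\phi(v)=\phi(x)-y$, so that $\phi(x-v)=y$. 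I expect the main obstacle to be the two structural facts of the middle step, namely the $\phi$-invariance of $V_{\lambda_{min}}$ via the minimality argument and the rigidity $\Hom_{(a,b)}(V,V)=M_{p}(\CC)$; once these are in hand, the passage to the quotient is a routine diagram chase.
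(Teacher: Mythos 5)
Your proposal is correct and follows essentially the same route as the paper: induction on the rank, $\phi$-invariance of the normal sub-module $V_{\lambda_{min}}$ via the minimality argument, the observation that $\phi|_{V_{\lambda_{min}}}$ is given by a constant matrix (so injective implies bijective there), and then the diagram chase on the quotient $E/V_{\lambda_{min}}$. The only cosmetic differences are that you phrase the rigidity step as $\Hom_{(a,b)}(V,V)\simeq M_p(\CC)$ rather than computing directly that the coefficients $S_i(b)$ are constants, and you fold the rank-one base case into the case $V=E$.
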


\begin{proof}
To show that bijectivity follows from injectivity, we will proceed by induction on the rank of the module.

If $E$ is of rank $1$ the statement of the proof is satisfied: in fact $E$ must be isomorphic to one of the $E_\lambda$
and the only $b$-linear morphisms from a $E_\lambda$ to itself that are also $a$-linear are those that send the
generator $e$ to $\alpha e$, $\alpha \in \CC$. They are all bijective for $\alpha\neq 0$.

Let now $E$ be of rank $n>1$. We can find a $\lambda_{min}$ (cf. \cite{barlet1}) that verifies the minimality property
of the previous proposition. Hence the module $V_{\lambda_{min}}$ is normal and isomorphic to a direct sum of copies of
$E_{\lambda_{min}}$.

Let $\{e_i\}$ be a basis of $V_{\lambda_{min}}$ composed of monomials of type $(\lambda_{min}, 0)$ and let $x$ another
monomial of type $(\lambda_{min}, 0)$. We want to show that $x$ is a linear combination of the elements of the basis,
with coefficients in $\CC\subset \CCb$.

From the definition of $V_{\lambda_{min}}$ follows that $x\in V_{\lambda_{min}}$. Suppose now that $x = \sum_i
S_i(b)e_i$ and let us apply $a$ to both sides. We obtain:
\[
ax = \sum_i \left( \lambda_{min}S_i(b)be_i + S_i'(b)b^2e_i\right) = \lambda_{min}bx + \sum_i S_i'(b)b^2e_i
\]
and since $x$ is a monomial of type $(\lambda_{min}, 0)$, we must have $S_i'(b)=0$ for all $i$ and therefore
\[
x = \sum_i S_i(0) e_i,
\]
as we wanted.

Let $\phi:E\ra E$ be an injective endomorphism of $E$ and $\{e_i\}$ a basis of $V_{\lambda_{min}}$. Every $\phi(e_i)$ is
a monomial of type $(\lambda_{min}, 0)$ and therefore is an element of $V_{\lambda_{min}}$.  The restriction of $\phi$
to $V_{\lambda_{min}}$ is therefore an endomorphism of $V_{\lambda_{min}}$:
\[
\phi|_{V_{\lambda_{min}}}: V_{\lambda_{min}} \ra V_{\lambda_{min}}.
\]

Moreover since the coefficients of the $\phi(e_i)$ in our base are complex constants, $\phi|_{V_{\lambda_{min}}}$
behaves as a linear application between finite dimensional spaces: in particular if it is injective, it is also
surjective.

In order to apply our induction hypothesis let us consider the following commutative diagram:
\[
\xymatrix{
0 \ar[r] & V_{\lambda_{min}} \ar@{^{(}->}[r] \ar[d]^\phi & E \ar@{->>}[r] \ar[d]^\phi &
E / V_{\lambda_{min}} \ar[r] \ar[d]^{\tilde\phi} & 0\\
0 \ar[r] & V_{\lambda_{min}} \ar@{^{(}->}[r] & E \ar@{->>}[r] & E / V_{\lambda_{min}} \ar[r] & 0\\
}
\]
where $\tilde\phi$ is the $(a,b)$-linear morphism induced on the quotient.  As we showed the first downward arrow is
bijective

The third arrow $\tilde\phi$ is injective: suppose in fact that we have two different classes with representatives $x$
and~$y\in E$ that map to the same class modulo $V_{\lambda_{min}}$. Then $\phi(x-y)$ is in $V_{\lambda_{min}}$.  From
the bijectivity of $\phi|_{V_{\lambda_{min}}}$ we can find an element $v\in V_{\lambda_{min}}$ such that
$\phi(x-y)=\phi(v)$ which in turn implies $x-y=v$ by the injectivity of $\phi$, which contradicts the fact that $x$ and
$y$ are in distinct classes modulo $V_{\lambda_{min}}$.

Since the rank of $E/V_{\lambda_{min}}$ is strictly inferior to the rank of $E$, we can apply the induction hypothesis
to show that $\tilde\phi$ is also bijective.

It follows from a basic result of homological algebra that the second arrow is bijective if it is injective.
\end{proof}

We can now consider endomorphisms that are not necessarily injective. Once again the structure of {\ab}s does not differ
essentially from that of finite vector spaces over $\CC$:

\begin{lemma}
  \label{lem:ker+im}
Let $E$ be a regular {\ab} and $\phi$ an endomorphism of $E$. Then $E$ splits into the direct sum of two $\phi$-stable
sub-{\ab}s $F$ and~$N$, with $\phi$ bijective on $F$ and nilpotent on $N$.
\end{lemma}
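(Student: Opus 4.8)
The plan is to establish the Fitting-type decomposition $E = F \oplus N$ by taking $F$ to be the ``eventual image'' and $N$ the ``eventual kernel'' of $\phi$, exactly as one does for endomorphisms of finite-dimensional vector spaces, while being careful about normality in the $(a,b)$-module setting. Concretely, I would consider the two chains of sub-$(a,b)$-modules
\[
\im \phi \supseteq \im \phi^2 \supseteq \cdots, \qquad
\Ker \phi \subseteq \Ker \phi^2 \subseteq \cdots,
\]
and first argue that both stabilize. Stabilization of the kernels is the delicate point, because a sub-$\CCb$-module of a free module need not be free, so I would instead run the stabilization argument on the image chain, where each $\im\phi^k$ is a finitely generated $\CCb$-module inside the free module $E$; since $\CCb$ is Noetherian the descending argument must be handled via ranks rather than naive descending chains. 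The cleanest route is to observe that the ranks $\rg(\im\phi^k)$ form a nonincreasing sequence of nonnegative integers, hence stabilize at some index $m$, after which $\phi$ restricts to a morphism $\im\phi^m \to \im\phi^m$ that is injective (its kernel has rank $0$), and is therefore an isomorphism on a suitable normalization by Proposition~\ref{injective_bijective}.

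Set $m$ to be an index past which the rank of $\im\phi^k$ is constant, and define $F$ to be the normalization of $\im\phi^m$ inside $E$ (the smallest normal sub-$(a,b)$-module containing it, i.e.\ its $b$-saturation) and $N = \Ker\phi^m$. Both $F$ and $N$ are visibly $\phi$-stable. On $F$ the map $\phi$ is injective: if $\phi(x)=0$ for $x \in F$ then $x \in \Ker\phi \cap F$, and since $\phi^m$ is essentially bijective on $\im\phi^m$ one forces $x=0$ up to $b$-torsion, which vanishes because $F$ is free; hence $\phi|_F$ is injective and by Proposition~\ref{injective_bijective} bijective. On $N$ the map $\phi$ is nilpotent by construction, since $\phi^m(N) = \phi^m(\Ker\phi^m) = 0$.

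It then remains to check that $E = F \oplus N$, both as $\CCb$-modules and compatibly with the $a$-structure. For the direct sum I would verify $F \cap N = \{0\}$ and $F + N = E$: the intersection is trivial because an element of $F \cap N$ lies in both the image and eventual kernel, and bijectivity of $\phi|_F$ forces it to zero; for the sum, given $x \in E$ write $\phi^m(x) \in \im\phi^m \subseteq F$, use bijectivity of $\phi^m$ on $F$ to find $f \in F$ with $\phi^m(f) = \phi^m(x)$, and then $x - f \in \Ker\phi^m = N$. Since $F$ and $N$ are both normal sub-$(a,b)$-modules and $\phi$-stable, the decomposition respects the $(a,b)$-structure.

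The main obstacle I expect is the normality/freeness bookkeeping: unlike the vector-space case, neither $\im\phi^k$ nor $\Ker\phi^k$ is automatically a normal sub-$(a,b)$-module, so the stabilization must be phrased in terms of $\CCb$-ranks and one must pass to $b$-saturations to land on genuine (free) sub-$(a,b)$-modules. The key leverage is Proposition~\ref{injective_bijective}, which converts the injectivity of $\phi$ on the eventual-image part into the genuine bijectivity needed to split off $F$; without that regularity input the elementary Fitting argument would not close.
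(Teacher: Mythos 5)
Your proof is correct and follows essentially the same Fitting-type argument as the paper: stabilize the chains of iterated images and kernels, take $F$ to be the eventual image and $N=\Ker\phi^m$, and use Proposition~\ref{injective_bijective} to upgrade injectivity of $\phi$ on the eventual image to bijectivity. The only differences are bookkeeping: the paper stabilizes the kernel chain (kernels of $(a,b)$-morphisms are automatically normal, and nested normal sub-{\ab}s of equal rank coincide) and deduces injectivity of $\phi$ on $\im\phi^m$ from $K_m=K_{m+1}$, whereas you stabilize the image ranks; and since $\CCb$ is a discrete valuation ring, every sub-$\CCb$-module of a free module is already free, so your stated worry on that point and the $b$-saturation of $\im\phi^m$ are unnecessary — the eventual image ends up being a direct summand, hence normal, automatically.
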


\begin{proof}
Consider the sequence of normal sub-{\ab}s
\[
K_n = \Ker \phi^n,\quad n\in\NN.
\]

Since two normal sub-{\ab}s $F\subset G$ are equal if and only if they have the same rank, the sequence of $K_n$
stabilizes beginning with a certain integer $m$: $K_m = K_{m+1}$.

On the other hand if we consider the sequence $I_n = \im \phi^n$, let us look at the restriction of $\phi$ to $I_m$:
\[
\phi|_{I_m}: I_m \ra I_{m+1}\subset I_m.
\]

This restriction is injective: if $y=\phi^m(x)\in\Ker\phi$, then $x\in K_{m+1}$ which is equal to $K_m$. Hence
$\phi^m(x) = y = 0$.  From the previous proposition we deduce that this restriction is in fact bijective, which means
that $I_{m+1} = \phi(I_m) = I_m$.

We can now take $F = I_m$ and $N = K_m$. They are clearly stable by $\phi$.  We will show that $E = F \oplus N$.

We have in fact $\Ker\phi\cap F = \{0\}$, since the restriction of $\phi$ to $I_m$ is injective. \textit{À fortiori},
since $K\subset \Ker\phi$ we have $F\cap N = \{0\}$.

Let's take an element $x\in E$. Since $I_m = I_{2m}$ we can find an element $y\in E$ such that $\phi^m(x) =
\phi^{2m}(y)$ and call $k$ the element $x - \phi^m(y)$. Thus we can write $x$ as a sum:
\[
x = \phi^m(y) + k
\]
of an element $\phi^m(y)\in I_m$ and an element $k\in K_m$, which implies that:
\[
E = N \oplus F.
\]

The restriction of $\phi$ to $N$ is nilpotent, since $\phi|_N^m = 0$, while we already showed that the restriction to
$I_m= F$ is bijective.
\end{proof}

We have now all the elements necessary to prove proposition~\ref{prop:bijective_or_nilpotent}:

\begin{proof}
  Let $E$ be a regular indecomposable {\ab} and $\phi$ an endomorphism of $E$. Then by lemma~\ref{lem:ker+im} $E$ splits
  into a sum
  \[ E = N \oplus F \]
  of two {\ab}s, with $\phi$ nilpotent on $N$ and bijective on $F$. But $E$ is indecomposable, therefore either $N=0$
  and $\phi$ is bijective or $F=0$ and $\phi$ is nilpotent.
\end{proof}

\subsection{Krull-Schmidt theorem}

This subsection will be devoted to the proof of a version of the Krull-Schmidt theorem for the theory of {\ab}s. The
principal argument of the proof will be proposition~\ref{prop:bijective_or_nilpotent} from the previous subsection.

\begin{theorem}[Krull-Schmidt for {\ab}s]

\label{krull-schmidt}
Suppose that we have two decompositions into direct sum of a regular {\ab} $E$:
\[
E = \bigoplus_{i=1}^m E_i
\]
\[
E = \bigoplus_{i=1}^n F_i
\]
where $m,n\in\NN$ and all $E_i$ and $F_i$ are indecomposable {\ab}s. Then $m=n$ and up to a reindexing of the modules
$E_i$ is isomorphic to $G_i$ for all $1\leq i \leq n$.
\end{theorem}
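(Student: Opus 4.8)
The plan is to follow the classical exchange argument for the Krull--Schmidt theorem, whose engine is Proposition~\ref{prop:bijective_or_nilpotent}. The first and decisive step is to upgrade that proposition into the statement that the endomorphism ring $\mathrm{End}(E_i)$ of each indecomposable summand is \emph{local}. Indeed, by Proposition~\ref{prop:bijective_or_nilpotent} every $\phi\in\mathrm{End}(E_i)$ is either bijective (a unit) or nilpotent; and if $\phi$ is nilpotent then $1-\phi$ is invertible, with inverse the finite sum $1+\phi+\phi^2+\cdots$. Hence for every $\phi$ either $\phi$ or $1-\phi$ is a unit, which is the standard criterion for locality. In particular the non-units form a two-sided ideal, so a finite sum of non-units is again a non-unit; equivalently, whenever a finite sum of endomorphisms of $E_i$ equals the identity, at least one of the summands must be an isomorphism.

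Next I would set up the exchange. Write $u_i\colon E_i\hookrightarrow E$, $p_i\colon E\twoheadrightarrow E_i$ for the inclusions and projections of the first decomposition and $v_j\colon F_j\hookrightarrow E$, $q_j\colon E\twoheadrightarrow F_j$ for those of the second; all are $(a,b)$-morphisms and $\sum_j v_jq_j=\mathrm{id}_E$. Composing, $\mathrm{id}_{E_1}=p_1u_1=\sum_j (p_1v_j)(q_ju_1)$ inside $\mathrm{End}(E_1)$. By the locality just established, some term $(p_1v_{j_0})(q_{j_0}u_1)$ is an automorphism $\alpha$ of $E_1$; after reindexing take $j_0=1$. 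Then $q_1u_1\colon E_1\to F_1$ is a split monomorphism, since $\alpha^{-1}p_1v_1$ is a left inverse, so its image is a direct summand of $F_1$; as $F_1$ is indecomposable and $E_1\neq 0$, the map $q_1u_1$ is an isomorphism $E_1\cong F_1$, and consequently $p_1v_1\colon F_1\to E_1$ is an isomorphism as well.

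With this in hand I would perform the replacement $E=F_1\oplus(E_2\oplus\cdots\oplus E_m)$. Writing $E'=E_2\oplus\cdots\oplus E_m$ and $\pi=p_1$ for the projection onto $E_1$ with kernel $E'$, the restriction $\pi|_{F_1}=p_1v_1$ is an isomorphism onto $E_1$; hence $F_1\cap E'=\Ker(\pi|_{F_1})=0$, and every $x\in E$ decomposes as $x=f+(x-f)$ with $f\in F_1$ chosen so that $\pi(f)=\pi(x)$ and $x-f\in E'$. All maps being $(a,b)$-morphisms, this is a decomposition of $(a,b)$-modules, and $F_1$ is a (necessarily normal) summand.

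Finally I would conclude by induction on $m$ (equivalently on $\rg E$). The base case $m=1$ is immediate: an indecomposable $E$ cannot be a direct sum of more than one nonzero indecomposable, forcing $n=1$ and $F_1=E$. For the inductive step, from $E=F_1\oplus E'$ and $E=\bigoplus_{j=1}^n F_j$ we obtain $E'\cong E/F_1\cong\bigoplus_{j=2}^n F_j$; thus $E'=\bigoplus_{i=2}^m E_i$ and $E'\cong\bigoplus_{j=2}^n F_j$ are two decompositions of the strictly smaller module $E'$ into indecomposables, to which the induction hypothesis applies. It yields $m-1=n-1$ and a reindexing matching $E_i\cong F_i$ for $i\geq 2$; combined with $E_1\cong F_1$ this proves the theorem. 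The main obstacle is the first paragraph: everything rests on promoting the bijective-or-nilpotent dichotomy to genuine locality of the endomorphism ring, since it is locality (closure of the non-units under addition) --- not the raw dichotomy --- that powers the selection of an invertible term in the exchange step.
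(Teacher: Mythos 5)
Your proof is correct and follows essentially the same route as the paper: the bijective-or-nilpotent dichotomy of Proposition~\ref{prop:bijective_or_nilpotent} is used to extract an invertible term from $\sum_j (p_1v_j)(q_ju_1)=\mathrm{id}_{E_1}$, the split-mono-into-an-indecomposable step is exactly Lemma~\ref{composition_iso}, and the exchange-plus-induction on $m$ is identical. The only cosmetic difference is that you package the key selection step as locality of $\mathrm{End}(E_1)$ (via invertibility of $\mathrm{id}-\phi$ for nilpotent $\phi$), whereas the paper proves the two-term sum lemma directly from the commutativity of $\phi$ and $\mathrm{id}-\phi$ and iterates.
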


For the proof of this theorem we need a couple of lemmas:

\begin{lemma}
  Let $E$ be a regular indecomposable {\ab} and $\phi$ an automorphism of $E$. Suppose moreover that $\phi = \phi_1 +
  \phi_2$. Then at least one of $\phi_1$, $\phi_2$ is an isomorphism.
\end{lemma}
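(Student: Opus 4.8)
The plan is to reduce the statement to Proposition~\ref{prop:bijective_or_nilpotent} by twisting $\phi_1$ and $\phi_2$ with the inverse of $\phi$, so that their sum becomes the identity. Since $\phi$ is an automorphism, $\phi^{-1}$ is again an $(a,b)$-endomorphism of $E$, and I would set $\psi_i = \phi^{-1}\circ\phi_i$ for $i=1,2$. Each $\psi_i$ is an endomorphism of $E$, being a composition of $(a,b)$-morphisms, and by $\CCb$-linearity of composition we obtain
\[
\psi_1 + \psi_2 = \phi^{-1}\circ(\phi_1 + \phi_2) = \phi^{-1}\circ\phi = \mathrm{id}_E.
\]
Here I read $\phi_1,\phi_2$ as $(a,b)$-endomorphisms of $E$, as they are in the intended application (compositions of the canonical projections and injections of a direct-sum decomposition).

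Next I would apply Proposition~\ref{prop:bijective_or_nilpotent} to $\psi_1$: since $E$ is regular and indecomposable, $\psi_1$ is either bijective or nilpotent. If $\psi_1$ is bijective, then $\phi_1 = \phi\circ\psi_1$ is a composition of two isomorphisms, hence an isomorphism, and we are done. If instead $\psi_1$ is nilpotent, say $\psi_1^N = 0$, then $\psi_2 = \mathrm{id}_E - \psi_1$ is invertible: the finite sum $\mathrm{id}_E + \psi_1 + \cdots + \psi_1^{N-1}$ is an $(a,b)$-endomorphism, and a direct computation, using only that the powers of $\psi_1$ commute with one another, shows it to be a two-sided inverse of $\psi_2$. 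Consequently $\phi_2 = \phi\circ\psi_2$ is an isomorphism.

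In either case at least one of $\phi_1$, $\phi_2$ is an isomorphism, which is the claim. The substantive input is entirely Proposition~\ref{prop:bijective_or_nilpotent}; the remainder is the classical Fitting-type argument witnessing that the endomorphism ring of a regular indecomposable {\ab} is local, its non-units being precisely the nilpotent endomorphisms. The only point that requires a small verification is that the geometric-series inverse stays inside the ring of $(a,b)$-endomorphisms, but this is immediate since it is a finite $\CCb$-linear combination of powers of the $(a,b)$-morphism $\psi_1$ and hence is itself an $(a,b)$-morphism. I therefore do not expect any genuine obstacle in carrying out this argument.
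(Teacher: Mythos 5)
Your proposal is correct and follows essentially the same route as the paper: compose with $\phi^{-1}$ to reduce to the case $\phi_1+\phi_2=\mathrm{Id}$, then invoke Proposition~\ref{prop:bijective_or_nilpotent}. The only (cosmetic) difference is in the last step --- the paper notes that $\phi_1$ and $\phi_2$ commute and derives a contradiction from both being nilpotent, whereas you invert $\mathrm{Id}-\psi_1$ directly by the finite geometric series; both are standard finishes of the same Fitting-type argument.
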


\begin{proof}
\label{sum_iso}
Be applying $\phi^{-1}$ to both terms, we can assume without loss of generality that $\phi = Id$ is the identity
automorphism.

The two endomorphisms $\phi_1$ and $\phi_2$ commute. In fact:
\[
\phi_1\phi_2 - \phi_2\phi_1 = \phi_1(\phi_1 + \phi_2) - (\phi_2 + \phi_1)\phi_1 = \phi_1 - \phi_1 = 0.
\]
By lemma \ref{prop:bijective_or_nilpotent} the $\phi_i$ can be either nilpotent or isomorphisms.  If they were both
nilpotent, their sum would be nilpotent, which is absurd. Hence the result.
\end{proof}

\begin{remark}
By subsequently applying the previous lemma, we can extend the result to the sum of more than two endomorphisms.
\end{remark}

\begin{lemma}
\label{composition_iso}
Let $E$ and $F$ be indecomposable regular {\ab}s and $\alpha: E\ra F$ and $\beta: F \ra E$ two $(a,b)$-linear morphisms.
Suppose that $\beta\circ\alpha$ is an isomorphism, then $\alpha$ and $\beta$ are also isomorphisms.
\end{lemma}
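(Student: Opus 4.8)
The plan is to leverage Proposition~\ref{prop:bijective_or_nilpotent}, applied to the composite $\alpha\circ\beta$ viewed as an endomorphism of $F$. Write $\psi = \beta\circ\alpha$, which is by hypothesis an automorphism of $E$, and set $\chi = \alpha\circ\beta \in \Hom_{(a,b)}(F,F)$. Since $F$ is regular and indecomposable, Proposition~\ref{prop:bijective_or_nilpotent} tells us that $\chi$ is either bijective or nilpotent, so the first step is to exclude the nilpotent case and thereby promote $\chi$ to an isomorphism.

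Suppose then that $\chi$ were nilpotent, say $\chi^k = (\alpha\beta)^k = 0$ for some $k$. Regrouping the composite gives the identity $\psi^{k+1} = (\beta\alpha)^{k+1} = \beta\circ(\alpha\beta)^k\circ\alpha = \beta\circ\chi^k\circ\alpha = 0$. But $\psi$ is an automorphism of $E$, so $\psi^{k+1}$ is again an automorphism and cannot vanish (we may assume $E\neq 0$, indecomposable modules being nonzero). This contradiction forces $\chi = \alpha\circ\beta$ to be an isomorphism of $F$.

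Once both composites $\beta\circ\alpha$ and $\alpha\circ\beta$ are known to be isomorphisms, the conclusion is purely formal. The morphism $\alpha$ then admits the left inverse $\psi^{-1}\circ\beta$ and the right inverse $\beta\circ\chi^{-1}$; whenever a map possesses both a left and a right inverse it is bijective and the two inverses coincide, so $\alpha$ is an isomorphism. Symmetrically $\beta$ has left inverse $\chi^{-1}\circ\alpha$ and right inverse $\alpha\circ\psi^{-1}$, hence is an isomorphism as well. In each case the set-theoretic inverse automatically respects the $a$-structure (if $\alpha(ax)=a\alpha(x)$ and $\gamma=\alpha^{-1}$, then $\gamma(ay)=a\gamma(y)$ by setting $y=\alpha(x)$), so it is a genuine $(a,b)$-isomorphism.

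The only real subtlety is the nilpotency argument: everything rests on the regrouping $(\beta\alpha)^{k+1}=\beta(\alpha\beta)^k\alpha$, which propagates a vanishing power of $\alpha\beta$ to a vanishing power of the automorphism $\beta\alpha$. I expect this to be the crux, together with the invocation of Proposition~\ref{prop:bijective_or_nilpotent}, whose applicability to $\chi$ is precisely what requires the indecomposability of $F$.
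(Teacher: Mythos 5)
Your proof is correct, but it takes a genuinely different route from the paper's. You apply Proposition~\ref{prop:bijective_or_nilpotent} to the endomorphism $\alpha\circ\beta$ of $F$, rule out nilpotency via the regrouping $(\beta\alpha)^{k+1}=\beta\circ(\alpha\beta)^{k}\circ\alpha$, and then finish with the two-sided-inverse argument; all of this is sound. The paper instead argues by a direct splitting: it shows $\im\alpha\cap\Ker\beta=\{0\}$ from the injectivity of $\beta\circ\alpha$, and that every $x\in F$ is the sum of $y=\alpha\circ(\beta\circ\alpha)^{-1}\circ\beta(x)\in\im\alpha$ and $x-y\in\Ker\beta$, so that $F=\im\alpha\oplus\Ker\beta$; since $\alpha$ is injective, $\im\alpha\neq\{0\}$, and the indecomposability of $F$ forces $\im\alpha=F$ and $\Ker\beta=\{0\}$, whence $\alpha$ is bijective and $\beta=(\beta\circ\alpha)\circ\alpha^{-1}$ as well. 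The paper's version is the more economical one: it uses only the indecomposability of $F$ and the fact that $\im\alpha$ and $\Ker\beta$ are sub-$(a,b)$-modules, and does not invoke the bijective-or-nilpotent dichotomy at all. Your version leans on the heavier Proposition~\ref{prop:bijective_or_nilpotent} (and hence on the regularity of $F$, which the hypotheses do supply), but it is the standard Fitting-style argument that works verbatim in any additive category where indecomposables have local endomorphism rings, and it makes transparent that the indecomposability of $E$ is never actually needed. Both proofs are valid.
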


\begin{proof}
Let prove that $F = \im \alpha \oplus \Ker \beta$. If $\alpha(x)\in \Ker\beta$, we have
\[
\beta\circ\alpha(x)=0,
\]
hence $x=0$ and therefore
\[
\im\alpha \cap \Ker\beta = \{0\}.
\]

Consider now an element $x\in F$ and let
\[
y = \alpha\circ(\beta\circ \alpha)^{-1}\circ \beta(x).
\]

We have
\[
\beta(x - y) = \beta(x) - \beta(y) = \beta(x) - \left( \beta\circ\alpha
\right)\circ\left( \beta\circ \alpha \right)^{-1} \circ \beta(x) = \beta(x)
- \beta(x) = 0.
\]
We can thus write $x$ as sum of an element $y$ of $\im\alpha$ and an element $x-y$ of $\Ker\beta$. This implies $F =
\im\alpha \oplus \Ker\beta$.

Now since $\beta\circ\alpha$ is injective, so must be $\alpha$ and $\im\alpha$ can not be $0$. But $F$ is indecomposable
therefore we must have $\im\alpha=F$ and $\Ker\beta=0$. It follows that $\alpha$ is bijective and
$\beta=(\beta\circ\alpha)\circ\alpha^{-1}$ must be also bijective.
\end{proof}

\begin{proof}[Proof of Krull-Schmidt theorem for {\ab}s]
We will show this theorem by induction on $m$.

If $m=1$, then $E$ is indecomposable and we must have $n=1$ and $E_1\simeq F_1$.

In the general case consider the morphisms
\[
q_i=\pi_i\circ p_1,
\]
where the $\pi_i$s are the projections on $F_i$ and the $p_j$s are the projections on $E_j$. Let consider the sum:
\[
\sum_i p_1\circ q_i = p_1\circ\sum_i \pi_i\circ p_1=
p_1\circ p_1 = p_1,
\]
is the identity on the component $E_1$. By the lemma~\ref{sum_iso}, there is an $i$ such that $p_1\circ q_i|_{E_1}:E_1
\ra E_1$ is an isomorphism.  Suppose, without loss of generality, it is $p_1\circ q_1$, then by the
lemma~\ref{composition_iso} $q_1|_{E_1}=\pi_1:E_1 \ra F_1$ is an isomorphism.

In order to apply the induction hypothesis, let note $G=\sum_{i=2}^m F_i$.  We want to show that $E_1 \oplus G$ is equal
to $E=F_1 \oplus G$. Since $\pi_1$ is an isomorphism of $E_1$ onto $F_1$ and its kernel is $G$ we must have
\[
E_1 \cap G = \{0\}:
\]
if $x \in E_1 \cap G$, then $\pi_1(x) = 0$, but $\pi_1$ restricted to $E_1$ is injective, so $x=0$. On the other hand
every element of $E$ can be written as $v + w$ with $v\in F_1$ and $w\in G$. If $y\in E_1$ is such that $\pi_1(y)=v$,
then we have:
\[
v + w = y + \pi_1(y) - y + w,
\]
and $\pi_1(y) - y\in W$ by definition of $\pi_1$. We can then conclude that $E= E_1 \oplus G=E_1 +\sum_{i=2}^m E_i$.

We have immediately $E/E_1 \simeq G \simeq \sum_{i=2}^m E_i$ and we can apply the induction hypothesis to $G$.
\end{proof}

We can now focus on finding hermitian isomorphisms of an {\ab} $E$ with its adjoint $\Ead$. The Krull-Schmidt theorem
will be useful to show the following decomposition:

\begin{proposition}
  \label{prop:decomposition of self-adjoint modules}
  Let $E$ be a regular self-adjoint {\ab}. Then $E$ is isomorphic to:
\[
E \simeq \bigoplus_{i=1}^r \left( F_i^{\oplus \alpha_i} \right) \oplus \bigoplus_{i=1}^s \left( G_i \oplus \check{G}^*_i\right)^{\oplus \beta_i}
\]
where $r$ and $s$ as well as the $\alpha_i$ and $\beta_i$ are positive integers. The $F_i$ are self-adjoint {\ab}s and
the $G_i$ are non self-adjoint {\ab}s. The isomorphism classes of the $F_i$, $G_i$ and $\check{G}^*_i$ are all disjoint.
\end{proposition}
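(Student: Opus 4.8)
The plan is to combine the Krull--Schmidt theorem (\ref{krull-schmidt}) with the formal behaviour of the adjoint operation $G\mapsto\check{G}^{*}$. First I would record the properties of this operation that I need. Since the duality functor ${}^{*}$ is exact and additive and the conjugation functor is exact and additive, their composite $G\mapsto\check{G}^{*}$ is an additive contravariant functor, so $\check{(A\oplus B)}^{*}\simeq\check{A}^{*}\oplus\check{B}^{*}$. Moreover it is involutive, $\check{(\check{G}^{*})}^{*}\simeq G$, because conjugation is an involution and the bidual is naturally isomorphic to the module itself. From involutivity I would deduce that the adjoint of an indecomposable module is again indecomposable: if $\check{G}^{*}$ admitted a nontrivial splitting, applying the adjoint once more and using additivity would split $G$, which is absurd.

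Next I would take an indecomposable decomposition of $E$ as constructed in the previous subsection and group the isomorphic summands, writing $E\simeq\bigoplus_{k}H_{k}^{\oplus m_{k}}$ with the $H_{k}$ pairwise non-isomorphic regular indecomposable {\ab}s (they are regular as direct summands of the regular module $E$). Applying the adjoint functor and using additivity gives $\Ead\simeq\bigoplus_{k}(\check{H}_{k}^{*})^{\oplus m_{k}}$, and by the first paragraph the $\check{H}_{k}^{*}$ are again pairwise non-isomorphic indecomposables, regular because $\Ead\simeq E$ is regular. The hypothesis that $E$ is self-adjoint supplies an isomorphism $E\simeq\Ead$, so these two expressions are indecomposable decompositions of one and the same regular {\ab}.

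Now I would invoke Krull--Schmidt: the multiset of isomorphism classes of indecomposable summands of $E$, counted with multiplicities, is uniquely determined. Comparing the two decompositions, the assignment $[H_{k}]\mapsto[\check{H}_{k}^{*}]$ is therefore a permutation of the finite set of classes occurring in the decomposition of $E$, and it preserves the associated multiplicities $m_{k}$. Since the adjoint is involutive, this permutation is an involution on the set $\{[H_{k}]\}$.

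Finally I would read off the stated form from the orbit structure of this involution. Its fixed points are exactly the self-adjoint indecomposable summands; collecting them gives the factors $F_{i}^{\oplus\alpha_{i}}$ with $\check{F}_{i}^{*}\simeq F_{i}$. The remaining classes split into two-element orbits $\{[G_{i}],[\check{G}_{i}^{*}]\}$ with $G_{i}\not\simeq\check{G}_{i}^{*}$, and because the involution preserves multiplicities the two partners occur with a common multiplicity $\beta_{i}$; pairing them yields the factors $(G_{i}\oplus\check{G}_{i}^{*})^{\oplus\beta_{i}}$. The listed classes are pairwise distinct by construction, which is the claimed disjointness. I expect the only genuinely substantive point to be the involutivity $\check{(\check{G}^{*})}^{*}\simeq G$ together with the consequent preservation of indecomposability; everything after that is a formal application of Krull--Schmidt.
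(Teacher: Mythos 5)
Your proposal is correct and follows essentially the same route as the paper: decompose $E$ into indecomposables, apply the adjoint functor to get a second decomposition of $E\simeq\Ead$, invoke the Krull--Schmidt theorem to match the summands, and sort them into self-adjoint components and pairs $\{G_i,\check{G}_i^*\}$ of equal multiplicity. You merely make explicit several points the paper leaves implicit (additivity and involutivity of $G\mapsto\check{G}^*$, preservation of indecomposability, and the orbit structure of the induced involution on isomorphism classes), which is a welcome tightening rather than a different argument.
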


\begin{proof}
Consider a decomposition of $E$ into indecomposable {\ab}s
\[
E = \sum_i E_i.
\]
Since $E$ is self-adjoint we have another decomposition given by
\[
E \simeq \check{E}^* = \sum_i \check{E}^*_i.
\]
The Krull-Schmidt theorem assures us that the factors are unique up to a permutation. So we can divide the $E_i$ into
two groups.

In the first group we find the self-adjoint components $F_i$ with a certain multiplicity.

In the second one we find the non self-adjoint components $G_i$ with the respective multiplicity. Since the two
decompositions $\sum_i E_i$ and $\sum_i \check{E}^*_i$ must contains the same modules up to a permutation, the
multiplicity of the $G_i$ and the $\check{G}_i^*$ must be equal.
\end{proof}

\begin{remark}
From the definition above we can immediately see that the non self-adjoint part of the decomposition always admits a
hermitian nondegenerate form In fact if we consider the module $G_i \oplus \check{G}_i^*$, a hermitian form can be given
by:
\begin{align*}
\Phi:G_i \oplus \check{G}_i^* & \ra (G_i\oplus \check{G}_i^*)^*\check{}
= \check{G}_i^* \oplus G_i\\
 (x,y) & \mapsto (y,x).
\end{align*}
If the multiplicity of a self-adjoint term $F_i$ is pair, we fall into the
same situation.

The case of an unpair multiplicity of a self-adjoint component is far more interesting and we will study it in the next
subsection.
\end{remark}

\subsection[Hermitian forms on indecomposable {\ab}s]{Hermitian forms on indecomposable {\ab}s}

As already noted in the previous subsection, the situation of an indecomposable self-adjoint {\ab} concerning hermitian
forms is far less regular and the existence is not always guaranteed. We have in fact the following theorem:
\begin{theorem}
\label{symmetric_or_antisymmetric}
Let $E$ be a regular indecomposable self-adjoint {\ab} and $E\neq\{0\}$. Then it admits a hermitian nondegenerate form or
an anti-hermitian one.
\end{theorem}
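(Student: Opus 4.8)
The plan is to start from the self-adjointness hypothesis, which by definition provides at least one nondegenerate sesquilinear form, that is, an isomorphism $\Phi_0\colon E\to\Ead$, and then to symmetrize it. Recall from the preceding discussion that a sesquilinear form $\Phi$, viewed as a morphism $E\to\Ead$, is hermitian exactly when $\Phi=\check{\Phi}^*$ and anti-hermitian exactly when $\Phi=-\check{\Phi}^*$. So I would first record that the operation $T\colon\Phi\mapsto\check{\Phi}^*$ is a $\mathbb{C}$-linear \emph{involution} of $\Hom_{(a,b)}(E,\Ead)$: it sends this module to itself (using biduality $E^{**}\simeq E$ together with the fixed identification $\breve{E_0}\simeq E_0$), and $T^2=\mathrm{Id}$ provided the chosen isomorphism $\breve{E_0}\simeq E_0$ is normalized so as to be compatible with double conjugation, which is a short check on the defining formula $H(v,w)=\check{H}(w,v)$. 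Moreover $T$ carries isomorphisms to isomorphisms, since both the conjugation functor and the duality functor ${}^*$ are exact.

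Granting this, the symmetrization is immediate. Set
\[
\Phi_+=\Phi_0+T\Phi_0,\qquad \Phi_-=\Phi_0-T\Phi_0 .
\]
Using $T^2=\mathrm{Id}$ and the additivity of $T$ one gets $T\Phi_+=\Phi_+$ and $T\Phi_-=-\Phi_-$, so that $\Phi_+$ is hermitian and $\Phi_-$ is anti-hermitian. The only thing left to prove, and this is the real content of the theorem, is that at least one of $\Phi_+$, $\Phi_-$ is \emph{nondegenerate}, i.e.\ an isomorphism.

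Here I would bring in indecomposability. Since $\Phi_0$ is an isomorphism and $T\Phi_0$ is again an isomorphism, the composite $c:=\Phi_0^{-1}\circ T\Phi_0$ is an automorphism of $E$, and
\[
\Phi_+=\Phi_0\circ(\mathrm{Id}+c),\qquad \Phi_-=\Phi_0\circ(\mathrm{Id}-c).
\]
Thus $\Phi_+$ (resp.\ $\Phi_-$) is an isomorphism if and only if $\mathrm{Id}+c$ (resp.\ $\mathrm{Id}-c$) is an automorphism of $E$. Now observe that
\[
(\mathrm{Id}+c)+(\mathrm{Id}-c)=2\,\mathrm{Id}
\]
is an automorphism of $E$. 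Since $E$ is regular and indecomposable, Lemma~\ref{sum_iso} applies to this decomposition of the automorphism $2\,\mathrm{Id}$ and guarantees that at least one of $\mathrm{Id}+c$, $\mathrm{Id}-c$ is an isomorphism. Consequently at least one of $\Phi_+$, $\Phi_-$ is nondegenerate, so $E$ admits either a nondegenerate hermitian form or a nondegenerate anti-hermitian one.

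The main obstacle is precisely this nondegeneracy step: the naive symmetrization always produces forms of the correct symmetry type, but a priori both could be degenerate. What rescues the argument is the dichotomy of Proposition~\ref{prop:bijective_or_nilpotent} in the guise of Lemma~\ref{sum_iso}: in the endomorphism ring of a regular indecomposable module the non-isomorphisms are stable under addition, so two of them cannot sum to the unit $2\,\mathrm{Id}$. I would also take care, before running this argument, to verify the two structural facts I used about $T$, namely that it is a well-defined involution and that it preserves isomorphisms, since the sign bookkeeping in $\check{\Phi}^*$ and the identification $\breve{E_0}\simeq E_0$ are exactly where an error would most easily creep in.
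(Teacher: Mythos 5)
Your proposal is correct and follows essentially the same route as the paper: the paper also sets $M=\Phi^{-1}\check{\Phi}^*$ (your $c$), observes that $\mathrm{Id}+M$ and $\mathrm{Id}-M$ cannot both fail to be isomorphisms because they commute and sum to $2\,\mathrm{Id}$ (your appeal to Lemma~\ref{sum_iso}, which the paper essentially re-derives inline from Proposition~\ref{prop:bijective_or_nilpotent}), and concludes that $\Phi+\check{\Phi}^*$ or $\Phi-\check{\Phi}^*$ is a nondegenerate hermitian, respectively anti-hermitian, form. Your extra care about $T$ being a well-defined involution compatible with the identification $\breve{E_0}\simeq E_0$ is a point the paper leaves implicit, but it is not a divergence in method.
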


\begin{proof}
Let $\Phi:E\ra \check{E}^*$ be any isomorphism of $E$ with its dual and pose $M=\Phi^{-1}\check{\Phi}^*$. Consider now
the two endomorphisms of $E$ given by:
\[
Id + M
\]
and
\[
Id - M
\]
they commute and can be either isomorphisms or nilpotent, since $E$ is indecomposable.  But if they were both nilpotent,
their sum $2Id$ would be nilpotent too, which is absurd.

If $Id + M$ is an isomorphism, so is $S = \Phi + \check{\Phi}^*$, which is associated to a nondegenerate hermitian form.
The bijectivity of $Id - M$ on the other hand gives us an isomorphism $A = \Phi - \check{\Phi}^*$, which comes from an
anti-hermitian form.
\end{proof}

Note that all the cases of the previous theorem are equally possible.

\begin{example}
The simplest example of a regular self-adjoint and indecomposable {\ab} which admits only a hermitian form is the elementary
{\ab} $E_0$ with the isomorphism that sends the generator $e$ to its adjoint $\check{e}^*$.
\end{example}

\begin{example}
In order to obtain only an anti-hermitian form, we can consider for a given $\lambda,\mu\in\CC$ the {\ab} $E$ of rank
$4$, generated by $\{e_1, e_2, e_3, e_4\}$ which verifies:
\begin{align}
ae_1 &= \lambda be_1\notag\\
ae_2 &= \mu be_2 + e_1\notag\\
  \label{eqn:antihermitian}
ae_3 &= -\mu be_3 + e_1\\
ae_4 &= -\lambda be_4 + e_2 - e_3\notag
\end{align}
whose adjoint basis satisfies:
\begin{align*}
a\cdot\check{e}_4^* &= \lambda b\check{e}_4^*\\
a\cdot\check{e}_3^* &= \mu b\check{e}_3^* - \check{e}_4^*\\
a\cdot\check{e}_2^* &= -\mu b\check{e}_2^* + \check{e}_4^*\\
a\cdot\check{e}_1^* &= -\lambda b\check{e}_1^* + \check{e}_3^* + \check{e}_2^*.
\end{align*}
It is easy to show by calculation that the only isomorphism between $E$ and $\Ead$ is, up to mutliplication by a complex
number, the one that sends $e_1$, $e_2$, $e_3$ and $e_4$ into $\check{e}_4$, $-\check{e}_3$, $\check{e}_2$ and
$-\check{e}_1$ respectively.

This is isomorphism is anti-hermitian and since there are no other isomorphisms $E$ is also indecomposable.
\end{example}

\begin{example}
  The regular {\ab} $E_0 \oplus E_0$ admits both an hermitian and anti-hermitian form.
\end{example}

\section{Duality of geometric $(a,b)$-modules}

In the study of the Brieskorn lattice K.~Saito introduced the concept of ``higher residue pairings'' (cf. \cite{saito}),
which can be defined using a set of axiomatic properties. 

Using the theory of $(a,b)$-modules R.~Belgrade showed the existence of a duality isomorphism between an $(a,b)$-module
associated to a germ of a holomorphic function in $\CC^{n+1}$ with an isolated singularity at the origin and its
$(n+1)$-dual. In this section we'll prove (as already noticed by R.~Belgrade in \cite{belgrade}) that the concept of
``higher residue pairings'' and self-adjoint {\ab} are linked.

In this section $D$ will always denote the Brieskorn module associated to a holomorphic function in $\CC^{n+1}$ with an
isolated singularity, while $E$ will denote its $b$-adic completion considered as an {\ab}.

The following theorem of R.~Belgrade gives a relationship between $E$ and its $(n+1)$-dual.
\begin{theorem}[Belgrade]
\label{letheoreme}
Let $E$ be the $(a,b)$-module associated to a germ of holomorphic function $f: \CC^{n+1} \rightarrow \CC$, then there is
a natural isomorphism between $E$ and its $(n+1)$-dual:
\[
\Delta:\quad E \simeq \breve{E}^*\otimesab E_{n+1}
\]
\end{theorem}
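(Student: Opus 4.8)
The plan is to realise the duality isomorphism $\Delta$ as the morphism associated, through Remark~\ref{rmk:bilinear_linear} and properties (v)--(vi) of Lemma~\ref{lem:product_prop}, to a nondegenerate sesquilinear form with values in $E_{n+1}$. Indeed, a bilinear map $\Phi: E \times \breve{E} \ra E_{n+1}$ induces an $(a,b)$-morphism $\pi: E \ra \Hom_{(a,b)}\left( \breve{E}, E_{n+1} \right)$, and by Lemma~\ref{lem:product_prop} (together with the identification $(\breve{E})^* \simeq \Ead$ obtained by commuting the duality and conjugation functors) the target is isomorphic to $\Ead \otimesab E_{n+1}$. Thus $\Delta$ exists as soon as we exhibit such a $\Phi$ that is nondegenerate, i.e. for which $\pi$ is an isomorphism. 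Unwinding the definitions, $\Phi$ must be $\CCb$-linear in the first variable, satisfy $\Phi(x, by) = -b\,\Phi(x,y)$ in the second (the sign change being that of $\breve{E}$), and obey the $(a,b)$-bilinearity relation $a\,\Phi(x,y) = \Phi(ax,y) - \Phi(x,ay)$, where the left-hand side refers to the $a$-structure of the target $E_{n+1}$, whose generator satisfies $ae_{n+1} = (n+1)b\,e_{n+1}$. It is precisely this shift by $n+1$ that will encode the ambient dimension.

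I would construct $\Phi$ geometrically, as a suitable normalisation of K.~Saito's higher residue pairing. Recall that $E$ is the $b$-adic completion of the Brieskorn module $D = \Omega_0^{n+1}/\left( \df\wedge\diff\Omega_0^{n-1} \right)$ and that it embeds as a lattice in the Gauss--Manin system of $f$, with $a$ acting as multiplication by $f$ and $b$ as $\partial_t^{-1}$. Two top-forms $\omega,\omega'$ determine period functions $s \mapsto \int_{\gamma(s)} \omega/\df$ over horizontal families of cycles in the Milnor fibre $F$; pairing these by means of the Poincaré--Lefschetz duality between $H_n(F)$ and $H_n(F,\partial F)$ (equivalently, the intersection form on vanishing cycles composed with the variation operator) and extracting the appropriate coefficient produces a $\CCb$-valued form. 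The operator $b = \partial_t^{-1}$ acts by integration in $s$, which yields the sesquilinearity $\Phi(x, by) = -b\,\Phi(x,y)$, while the complex dimension $n$ of $F$ forces the pairing to land in $b^{n+1}\CCb$, i.e. in the factor $E_{n+1}$.

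It then remains to verify the compatibilities and nondegeneracy. Well-definedness on the quotient $D$, namely that $\df\wedge\diff\Omega_0^{n-1}$ pairs to zero, follows from Stokes' theorem, since such forms integrate to derivatives of the period functions; the $a$-compatibility with the $(n+1)$-shift is a Leibniz-type computation for $f\omega$ against the connection. For nondegeneracy I would argue by reduction modulo $b$: since $E$ is free over $\CCb$, the morphism $\pi$ is an isomorphism as soon as it is one modulo $b$, and the reduction, identified with the cohomology of the Milnor fibre, carries as induced pairing the nondegenerate intersection form of the isolated singularity; a Nakayama-type argument then upgrades nondegeneracy from the reduction to the full $\CCb$-module. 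The main obstacle is exactly the bookkeeping of the degree shift: matching the geometric shift coming from $\dim_{\CC} F = n$ and the ambient dimension $n+1$ with the algebraic factor $E_{n+1}$, and checking that the higher residue pairing lands in the correct power of $b$ with the normalisation making $\pi$ an isomorphism rather than merely injective.
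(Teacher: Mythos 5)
The paper does not actually prove this theorem: it is imported from Belgrade (\cite{belgrade}), and the only structural information used later is that $\Delta$ is built as a composite of six morphisms through resolutions by currents, the key inputs being the acyclicity of the complexes $(\Db^{\bullet,q},\df\wedge)$ for an isolated singularity and local duality for $\mathcal{H}^{n+1}_0$. Your route --- realising $\Delta$ as the morphism attached, via Remark~\ref{rmk:bilinear_linear} and Lemma~\ref{lem:product_prop}, to a nondegenerate $E_{n+1}$-valued sesquilinear form built from period integrals and Poincaré--Lefschetz duality on the Milnor fibre --- is therefore genuinely different (it is closer to Saito's original construction). Your formal reductions are sound: the translation of an isomorphism $E\simeq \Ead\otimesab E_{n+1}$ into such a form, the sign conventions $\Phi(x,by)=-b\,\Phi(x,y)$ and $a\Phi(x,y)=\Phi(ax,y)-\Phi(x,ay)$ with $ae_{n+1}=(n+1)be_{n+1}$, and the Nakayama step (a $\CCb$-linear map of free finite-rank modules that is bijective mod $b$ is bijective) are all correct.

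The gap is that the step you defer as ``bookkeeping of the degree shift'' is the entire content of the theorem. The Poincaré--Lefschetz/Seifert pairing is naturally defined on the localised Gauss--Manin system (after inverting $b$); the assertion that its restriction to the Brieskorn lattice $E$ lands exactly in $b^{n+1}\CCb$ and induces there a \emph{perfect} pairing of $\CCb$-modules is precisely the self-duality of the lattice, and it does not follow from general principles --- it is Saito's theorem (resp.\ Belgrade's, via the current-theoretic construction). Relatedly, your nondegeneracy argument misidentifies the reduction mod $b$: one has $E/bE\simeq \Omega^{n+1}/\df\wedge\Omega^{n}\simeq \mathcal{O}/J(f)$, the Milnor algebra, and the induced pairing is Grothendieck's residue (this is exactly what the paper's computation of $\Delta_0$ establishes), not the intersection form on the cohomology of the Milnor fibre; the latter can be degenerate and lives on a different object. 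Nondegeneracy mod $b$ does hold, by local duality for the residue pairing, but to invoke it you must first prove that your geometric pairing actually induces the residue on $E/bE$ --- which again requires an explicit descent through currents or an equivalent computation, rather than being a formality.
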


We can obtain from this isomorphism a series $\Delta_k: E \times E \rightarrow \CC$ of bilinear forms defined as follow:
\[
\left[\Delta(y)\right](x) = (n+1)!\sum_{k=0}^{+\infty} \Delta_k (x, y)
b^ke_{n+1}
\]
with $x$, $y \in E$.

\section{``Higher residue pairings'' of K. Saito}

K. Saito introduced in \cite{saito} a series of pairings on the Brieskorn
lattice $D$ which are called ``higher residue pairings'':
\[ K^{(k)}:\quad D\times D \rightarrow \CC \qquad k\in \mathbb{N}\]
which are characterized by the following properties:
\begin{enumerate}
\item $K^{(k)}(\omega_1,\omega_2) = K^{(k+1)}(b\omega_1,\omega_2) = -
K^{(k+1)}(\omega_1,b\omega_2)$.
\item $K^{(k)}(a\omega_1,\omega_2) - K^{(k)}(\omega_1,a\omega_2) =
(n+k)K^{(k-1)}(\omega_1,\omega_2)$.
\item $K^{(0)}$ satisfies
  \[K^{(0)}(D,bD) = K^{(0)}(bD,D) = 0\]
  and induces Grothendieck's residue on the quotient $D/bD$.
\item $K^{(k)}$ are $(-1)^k$-symmetric.
\end{enumerate}

\begin{remark}
We notice that from the properties (i) and (iii) above we can deduce that
$K^{(k)}(D,b^{k+1}D) = K^{(k)}(b^{k+1}D,D) = 0$, so we can consider the
pairings $K^{(k)}$ as being defined on $D/b^{k+1}D$.
\end{remark}

In the following section we'll show the following result:

\begin{proposition}
The $\Delta_k$ verify the properties (i)--(iii) of the ``higher residue
pairings'' of K.~Saito.
\end{proposition}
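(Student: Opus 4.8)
The plan is to read off all three properties from the single fact that $\Delta$ is an isomorphism of {\ab}s, i.e. that it commutes with both the $b$- and the $a$-structures of $E$ and of $\breve{E}^*\otimesab E_{n+1}$. First I would reinterpret the data. By lemma~\ref{lem:product_prop}(\ref{tensor_hom}) and the remark following it, $\breve{E}^*\otimesab E_{n+1}\simeq \Hom_{(a,b)}(\breve{E},E_{n+1})$, so $\Delta(y)$ is a $\CCb$-linear map $\breve{E}\ra E_{n+1}$ and, by remark~\ref{rmk:bilinear_linear}, the assignment $(x,y)\mapsto[\Delta(y)](x)$ is an $(a,b)$-bilinear map $B:\breve{E}\times E\ra E_{n+1}$. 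Writing $B(x,y)=(n+1)!\sum_k\Delta_k(x,y)\,b^k e_{n+1}$, the $\Delta_k$ are simply the coefficients of this one $E_{n+1}$-valued form, and each of Saito's axioms (i)--(iii) will correspond to one feature of $B$ (up to the labelling of the two arguments, the conjugate variable $x\in\breve{E}$ being the one that carries the sign).

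Property (i) I would obtain from $\CCb$-linearity alone, together with the sign carried by $\breve{E}$. Since $\Delta$ is a morphism of the underlying $\CCb$-modules, $B(x,by)=bB(x,y)$; on the other hand $\Delta(y)$ is $\CCb$-linear as a map on $\breve{E}$, and $b\cdot_{\breve{E}}x=-bx$ gives $B(bx,y)=-bB(x,y)$. Extracting the coefficient of $b^k e_{n+1}$ on both sides turns these into $\Delta_{k-1}(x,y)=\Delta_k(x,by)=-\Delta_k(bx,y)$, which is exactly (i).

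For property (ii) I would use $a$-compatibility. As $B$ is $(a,b)$-bilinear on $\breve{E}\times E$, we have $a_{E_{n+1}}B(x,y)=-B(ax,y)+B(x,ay)$, the minus sign again coming from $a\cdot_{\breve{E}}x=-ax$. The key computation is the $a$-action on the elementary module: iterating $ab-ba=b^2$ gives $ab^k=b^k a+kb^{k+1}$, whence $a(b^k e_{n+1})=(n+1+k)\,b^{k+1}e_{n+1}$. Substituting the expansion of $B$ and comparing the coefficients of $b^k e_{n+1}$ yields $\Delta_k(x,ay)-\Delta_k(ax,y)=(n+k)\Delta_{k-1}(x,y)$, i.e. (ii); the factor $(n+k)$ is produced precisely by the parameter $n+1$ of $E_{n+1}$.

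Property (iii) splits into a formal part and a geometric one. The vanishing $K^{(0)}(D,bD)=K^{(0)}(bD,D)=0$ is immediate from the previous steps: since $B(x,by)=bB(x,y)$ and $B(bx,y)=-bB(x,y)$ both lie in $bE_{n+1}$, their $b^0$-coefficient is zero, so $\Delta_0(x,by)=\Delta_0(bx,y)=0$ and $\Delta_0$ descends to $E/bE\times E/bE=D/bD\times D/bD$. The remaining and genuinely hard point is to identify this induced form with Grothendieck's residue. Here I would not argue formally but unwind Belgrade's construction of $\Delta$ from theorem~\ref{letheoreme}: reducing modulo $b$ identifies $D/bD$ with the Jacobian (Milnor) algebra of $f$, and the top part of $\Delta$ is given by the cup product followed by integration, which on $D/bD$ is the local residue symbol. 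Verifying that the normalisation by $(n+1)!$ makes $\Delta_0\bmod b$ coincide with Grothendieck's residue is the main obstacle, since it requires the explicit geometric description of $\Delta$ rather than the purely algebraic $(a,b)$-module formalism that settles (i) and (ii).
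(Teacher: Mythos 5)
Your treatment of properties (i) and (ii) is essentially the paper's own argument: both reduce to the $b$- and $a$-compatibility of $\Delta$ together with the relation $ab^k e_{n+1}=(n+k+1)b^{k+1}e_{n+1}$, and the vanishing half of (iii) follows from (i) exactly as you say. One caveat on bookkeeping: with the identification the paper actually uses (Belgrade's $\delta$-dual, in which $\Delta(y)$ is genuinely $b$-linear as a map on $E$ and the conjugation sign sits in the $b$- and $a$-action on the Hom-module), the minus sign lands on the \emph{second} argument, giving $\Delta_k(x,y)=\Delta_{k+1}(bx,y)=-\Delta_{k+1}(x,by)$ and $\Delta_k(ax,y)-\Delta_k(x,ay)=(n+k)\Delta_{k-1}(x,y)$, whereas your convention puts it on the first. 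You flag this as a relabelling, which is fair, but as written your identities are Saito's axioms for $(x,y)\mapsto\Delta_k(y,x)$ rather than for $\Delta_k$ as defined by the displayed formula, so the convention has to be pinned down once before the statement is literally verified.

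The genuine gap is the residue half of (iii). You correctly isolate it as the hard point, but then only gesture at it (``unwind Belgrade's construction\ldots cup product followed by integration'') and explicitly defer the verification. This is exactly where the paper does its real work: it factors $\Delta$ through Belgrade's chain $E\ra F_1\ra F_2\leftarrow F_3\ra F_4\ra F_5\ra \breve{E}^*\otimesab E_{n+1}$, passes each arrow to the quotient by $b$, and computes the inverse of the arrow $F_3/bF_3\ra F_2/bF_2$ explicitly by solving the system $T=df\wedge\alpha^{n,0}$, $\dbar\alpha^{p,q}=df\wedge\alpha^{p-1,q+1}$, $\dbar\alpha^{0,n}=S$ with principal-value and residue currents $\alpha^{n-k,k}_{i_1,\ldots,i_{k+1}}$; Stokes' theorem and an induction on $k$ (showing the class is independent of the indices $i_q$) produce precisely the iterated limit of integrals over $\left\{ |\dd_k f|=\epsilon_k \right\}$ defining $Res(g,\cdot)$, with the $1/(n+1)!$ matching the normalisation built into the definition of the $\Delta_k$. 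Without some version of this computation (or an explicit appeal to the place in \cite{belgrade} where $\Delta_0$ is identified with the residue pairing), property (iii) is asserted rather than proved, and the proposal does not establish the proposition as stated.
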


The prove will be performed by steps.

\section{Proof of the proposition}

\subsection{Proof of (i)}
We use the $b$-linearity of $\Delta(y)$ to obtain:
\begin{multline*}
\sum_k(n+1)!\Delta_k(bx,y)b^ke_{n+1} = \left[\Delta(y)\right](bx) =
b \left[\Delta(y)\right](x) =\\
\sum_k(n+1)!\Delta_k(x,y)b^{k+1}e_{n+1}
\end{multline*}
which gives us $\Delta_k(x,y)=\Delta_{k+1}(bx,y)$. And similarly by using
the $b$-linearity of $\Delta$ and the adjoint morphism, we obtain:
\[
\Delta(by)(x) = \check{\Delta}^*(x)(by) =
-b\check{\Delta}^*(x)(y)=-b\Delta(y)(x),
\]
and therefore
\begin{multline*}
(n+1)!\sum_k\Delta_k(x,by)b^ke_{n+1} = \Delta(by)(x) =
-b \Delta(y)(x) =\\
(n+1)!\sum_k-\Delta_k(x,y)b^{k+1}e_{n+1},
\end{multline*}
which implies $\Delta_k(bx,y) = -\Delta_{k+1}(x,by)$.

\subsection{Proof of (ii)}
Since $\Delta$ is an isomorphism we have $\Delta(ay) =
a\cdot_{\check{E}^*\otimes E_\delta}[\Delta(y)]$ and:
\[(n+1)!\sum_k\Delta_k(x,ay)b^ke_{n+1} = \Delta(ay)(x) = a\cdot[\Delta(y)](x) =
\]\[
= \Delta(y)(ax) - a[\Delta(y)(x)] =
(n+1)!\sum_k\left(\Delta_k(ax,y)b^ke_{n+1} -
\Delta_k(x,y)ab^ke_{n+1}\right)\]
The definition of $(a,b)$-module and $E_{n+1}$
($ae_{n+1}=(n+1)be_{n+1}$) gives the following relation
\[ ab^ke_{n+1} = b^kae_{n+1} + kb^{k+1}e_{n+1} = (n+k+1)b^{k+1}e_{n+1} \]
hence follows:
\[
\Delta_k(ax,y) - \Delta_k(x,ay) = (n+k)\Delta_{k-1}(x,y)
\]

\subsection{Grothendieck's residue}

We have to show now that the pairing $\Delta_0$ induces Grothendieck's
residue on $D/bD\simeq \Omega^{n+1}/df \wedge \Omega^n$.

\textbf{Proof of (iv):} From the definition of $\Delta_0$ and the
$b$-linearity of $\Delta$ it's easy to see that $\Delta_0(D,bD) =
\Delta_0(bD, D) = 0$. We can hence consider $\Delta_0$ as a pairing on
$D/bD$.

Grothendieck's residue is defined as follows:
\[
Res(g,h) := \lim_{\varepsilon_j \rightarrow 0,\forall j}
\int_{|\dd f / \dd z_j| = \varepsilon_j}
\frac{gh \dz}{\dd f/ \dd z_1 \cdots \dd f/ \dd z_{n+1}}
\]
where $g,h \in \mathcal{O}$ and $\dz = \dz_1 \wedge \dots \dz_{n+1}$.

\noindent The morphism $\Delta$ is defined as composed morphism of six \hbox{$(a,b)$-modules} morphism (\cite{belgrade})
as showed by the following graph:
\[
\xymatrix
{
E \ar[r]^\alpha & F_1 \ar[r]^\beta & F_2 & \\
& & & F_3 \ar[ul]^\gamma \ar[dl]^\delta\\
\breve{E}_{n+1} & F_5 \ar[l]^\zeta & F_4 \ar[l]^\eta & \\
}
\]
These morphisms pass to the quotient by the action of $b$ in order to give
a decomposition of the morphism $\Delta_0$:
\[
\xymatrix
{
E/bE \ar[r]^{\tilde\alpha} & F_1/bF_1 \ar[r]^{\tilde\beta} & F_2/bF_2 & \\
& & & F_3/bF_3 \ar[ul]^{\tilde\gamma} \ar[dl]^{\tilde\delta}\\
\frac{\left(\breve{E}^*\otimes E_{n+1}\right)}{b\left(\breve{E}^*\otimes
E_{n+1}\right)} & F_5/bF_5 \ar[l]^{\tilde\zeta} & F_4/bF_4
\ar[l]^{\tilde\eta} & \\
}
\]
We have to verify that the image of $[g\dz]$ by $\Delta_0$ is
$Res(g,\cdot)$, where $g\dz$ is an element of $\Omega^{n+1}$. We'll
accomplish this in many steps using the decomposition above.

\begin{enumerate}
\item {\bf Step 1: $E$, $F_1$ and $F_2$.} We have the following
isomorphisms:
\[
\frac{F_1}{bF_1} \simeq \frac{\Omega^{n+1}}{df\wedge \Omega^n}\qquad
\frac{F_2}{bF_2} \simeq \frac{\Db^{n+1}}{(\dbar - df\wedge) \Db^n},
\]
the morphism $\tilde\alpha$ coincides with the identity on
$\Omega^{n+1}/df\wedge \Omega^n$ and $\tilde\beta$ is induced by the
inclusion $i: \Omega^{n+1} \mapsto \Db^{n+1}$. We deduce that
$\tilde\beta\circ\tilde\alpha([g\dz])=[i(g\dz)]$. Let write
$T\in\Db^{n+1,0}$ the current $i(g\dz)$.

\item {\bf Step 2: path between $F_2$ and $F_3$} By using the description
of the lemma 3.4.2 of \cite{belgrade} we see that:
\[
\frac{F_3}{bF_3} = \frac{\Ker(\Db^{0,n+1} \stackrel{df\wedge}{\rightarrow}
\Db^{1,n+1})}{\dbar\Ker(\Db^{0,n} \stackrel{df\wedge}{\rightarrow}
\Db^{1,n})}
\]
and the isomorphism $\tilde\gamma$ is induced by the inclusion
$\Db^{0,n+1}\subset \Db^{n+1}$. In order to find $S:=\tilde\gamma^{-1}(T)$
we have to solve the following system:
\begin{eqnarray*}
T & = & df \wedge \alpha^{n,0} \\
\dbar \alpha^{n,0} & = & df \wedge \alpha^{n-1,1} \\
\cdots && \cdots \\
\dbar \alpha^{1, n - 1} & = & df \wedge \alpha^{0,n}\\
\dbar \alpha^{0,n} & = & S\\
\end{eqnarray*}
where the $\alpha^{p,q} \in \Db^{p,q}$. There is a solution to this system
of equations since the complex $(\Db^{\bullet,q};df\wedge)$ is acyclic in degree
$\neq 0$ for all $q$ in $0,\ldots,n+1$ and the solution verifies $[S] =
[T]$ where $[\cdot]$ is the equivalence class in $F_2/bF_2$.
\[
(\dbar - df\wedge) \sum_{k=0}^n \alpha^{k,n-k} = \dbar
\alpha^{0,n} - df \wedge \alpha^{n,0} = S - T
\]

We can compute this solution explicitly.  Let be $(p,q)\in \mathbb{N}^2$
and $\phi^{p,q}$ a $C^{\infty}$ test form with compact support and of type
$(p,q)$. The action of $T$ over $\phi^{0,n+1}$ is given by:
\[ <T,\varphi^{0,n+1}> = \int \varphi^{0,n+1}\wedge g\dz \]
then the following current verifies $T= df \wedge \alpha^{n,0}$:
\[ <\alpha^{n,0}, \varphi^{1,n+1}> = \lim_{\varepsilon_1 \rightarrow 0}
\int_{|\dd_1f|\geq\epsilon_1} \frac{\varphi^{1,n+1}\wedge g\dz_2\wedge \ldots
\wedge \dz_{n+1}}{\dd_1f} \]
in fact:
\begin{eqnarray*}
<df\wedge \alpha^{n,0}, \varphi^{0,n+1}> &=& \lim_{\varepsilon_1 \rightarrow 0} \int_{|\dd_1f|\geq\epsilon_1}
\frac{\varphi^{0,n+1}\wedge df \wedge g\dz_2\wedge \ldots \wedge \dz_{n+1}}{\dd_1f} \\
&=& \int \varphi^{0,n+1}\wedge g\dz\\
\end{eqnarray*}
and thanks to the Stokes' theorem:
\begin{eqnarray*}
<\dbar\alpha^{n,0},\varphi^{1,n}>&=&-<\alpha^{n,0},\dbar\varphi^{1,n}>\\ &=&\lim_{\varepsilon_1 \rightarrow 0} -
\int_{|\dd_1 f| \geq \epsilon_1} \frac{\dbar\varphi^{1,n}\wedge g\dz_2\wedge \ldots \wedge \dz_{n+1}}{\dd_1f}\\
&=& \lim_{\varepsilon_1 \rightarrow 0} \int_{|\dd_1 f| = \epsilon_1} \frac{\varphi^{1,n}\wedge g\dz_2\wedge \ldots
\wedge \dz_{n+1}}{\dd_1f}\\
\end{eqnarray*}

\noindent We'll remark that the currents $\alpha_k^{n,0}$ defined below for $1 \leq k \leq n+1$ also satisfy $df \wedge
\alpha^{n,0}_k = T$:
\[
<\alpha_k^{n,0},\varphi^{1,n+1}> =
\]
\[ =
\lim_{\varepsilon_k \rightarrow 0}
\int_{|\dd_kf|\geq\epsilon_k} \frac{(-1)^{k+1}\varphi^{1,n+1}\wedge
g\dz_1\wedge \ldots \widehat{\dz_k} \ldots \wedge \dz_{n+1}}{\dd_kf}
\]
and that $[\dbar\alpha^{n,0}] = [\dbar\alpha_k^{n,0}]$ in $F_2/bF_2$: in
fact $(\dbar - df \wedge) (\alpha^{n,0} - \alpha_k^{n,0}) =
\dbar\alpha^{n,0} - \dbar\alpha_k^{n,0}$.

For all $k \in 0,\ldots,n$ and $1\leq i_1 < \ldots < i_{k+1} \leq n+1$ let us
define:
\[
\alpha_{i_1,\ldots,i_{k+1}}^{n-k,k} =
\frac{1}{(k+1)!}\mathop{\lim_{\epsilon_{i_q} \rightarrow 0}}_{\forall 1\leq
q \leq k+1} \mathop{\int_{|\dd_{i_1}f| \geq \epsilon_{i_1}}}_{|\dd_{i_q}f|
= \epsilon_{i_q}} \frac{(-1)^{(\sum_q i_q) + 1}g\bigwedge_{l\neq
i_1,\ldots,i_{k+1}}\dz_l}{\dd_{i_1}f\ldots\dd_{i_{k+1}}f}
\]
and let $\alpha^{n-k,k} := \alpha_{1,2,\ldots,k+1}^{n-k,k}$.

A simple computation gives us:
\[
\left<df\wedge\alpha_{i_1,\ldots,i_{k+1}}^{n-k,k},\phi^{k,n-k+1}\right> =
\left<\frac{1}{k+1} \sum_{q=1}^{k+1} \dbar \alpha_{i_1, \ldots , \widehat{i_q}
,\ldots ,i_{k+1}}^{n-k+1,k-1}, \phi^{k,n-k+1}\right>
\]
using this formula, we can prove by induction on $k$ that the class of the current $\alpha_{i_1,\ldots,i_{k+1}}^{n-k,k}$
doesn't depend upon the $i_q$s. This gives us
\[
\left[df\wedge \alpha^{n-k,k}\right] =
\left[\dbar\alpha^{n-k+1, k-1}\right].
\]

\noindent In particular
$\dbar\alpha^{0,n}$ acts upon the test function $\phi^{n+1,0}$
in the following way:
\[ <\dbar\alpha^{0,n},\varphi^{n+1,0}> = \frac{1}{(n+1)!}\lim_{\stackrel{\epsilon_k
\rightarrow 0}{\forall k}} \int_{\stackrel{|\dd_kf| = \epsilon_k}{\forall
k}}
\frac{\varphi^{n+1,0}g}{\dd_1 f \ldots \dd_{n+1} f}
\]

\item {\bf Step 3 from $F_3/bF_3$ to $(D/bD)^{*}$}: let notice that
$S$ is a current of type $(0,n+1)$ with support in the origin.

\noindent We have the following isomorphisms:
\[
\frac{F_4}{bF_4} \simeq
\Ker\left(\mathcal{H}_0^{n+1}(X,\mathcal{O})\stackrel{df\wedge}\rightarrow
\mathcal{H}_0^{n+1}(X,\Omega^1)\right)
\]
and the isomorphism between $F_3/bF_3$ and $F_4/bF_4$ is the natural one,
and
\[
\frac{F_5}{bF_5} \simeq \left(\frac{\Omega^{n+1}}{df\wedge\Omega^n}\right)^{*}
\]

\end{enumerate}

From steps 1--3 we deduce that $\Delta_0$ induces Grothendieck's
residue.

\subsection{Property (iv)}
We will that the isomorphism given by R.~Belgrade can be easily transformed
into one that verifies the property.

Let $\Delta: E \ra \Ead \otimesab E_{n+1}$ be Belgrade's isomorphism. By tensoring with $E_{(n+1)/2)}$ we can show that,
the isomorphisms between $E$ and $\check{E}^*\otimesab E_{n+1}$ are in bijection with the isomorphisms between $E
\otimesab E_{-(n+1)/2}$ and it adjoint, through the map that sends an isomorphism $\Phi$ to $\Phi\otimesab
Id_{E_{-(n+1)/2}}$.

By an easy calculation we can prove the following lemma:

\begin{lemma}
  Let $\Delta: E \ra \Ead \otimes E_{n+1}$ be an isomorphism and
  \[
  \Delta(y)(x) = (n+1)!\sum_k \Delta_k(x,y)b^ke_{n+1}
  \]
  for each $x$ and~$y\in E$.  Then the $\Delta_k$ satisfy Saito's condition (iv) if and only if the isomorphism $\Delta
  \otimesab Id_{E_{-(n+1)/2}}$ is hermitian.
\end{lemma}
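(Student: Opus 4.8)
The plan is to unwind the two equivalent descriptions of "hermitian" and "Saito's condition (iv)" and show they coincide term by term in the $b$-expansion. The statement connects an abstract symmetry property of the isomorphism $\Delta \otimesab Id_{E_{-(n+1)/2}}$ (namely that it equals its own adjoint up to sign, i.e. is hermitian) with the concrete $(-1)^k$-symmetry of the bilinear forms $\Delta_k$. My strategy is to translate both sides into the common language of the coefficients $\Delta_k(x,y)$ and match them.

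First I would recall, from the discussion just before Section~3, that specifying an isomorphism $\Delta: E \ra \Ead \otimesab E_{n+1}$ is equivalent to specifying an isomorphism $\Delta \otimesab Id_{E_{-(n+1)/2}}: E\otimesab E_{-(n+1)/2} \ra \Ead \otimesab E_{(n+1)/2} \simeq \widecheck{(E\otimesab E_{-(n+1)/2})}^*$, so the target isomorphism is genuinely an isomorphism of an {\ab} with its adjoint and the notion of being hermitian makes sense. The hermitian condition, as spelled out in the formulation following remark~\ref{rmk:bilinear_linear}, is the identity $\Phi = \check\Phi^*$, which at the level of forms reads $B(x,y) = \check B(y,x)$ for the associated sesquilinear form $B$ on $E\otimesab E_{-(n+1)/2}$.

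Next I would compute how the conjugation $\check{}$ acts on the $b$-expansion. Recall that $b$ acts on $\check{E}$ with a sign, so passing to the conjugate sends $b^k$ to $(-1)^k b^k$; this is exactly where the alternating sign $(-1)^k$ will enter. The plan is to write out $\Delta(y)(x) = (n+1)!\sum_k \Delta_k(x,y)b^k e_{n+1}$, apply the adjoint and conjugation operations to the whole expression, and read off that the hermitian identity $\Phi = \check\Phi^*$ becomes, coefficient by coefficient, the relation $\Delta_k(x,y) = (-1)^k \Delta_k(y,x)$. The tensor factor $E_{-(n+1)/2}$ only contributes a bookkeeping shift that is absorbed by the identification $\Ead\otimesab E_{(n+1)/2}\simeq \widecheck{(E\otimesab E_{-(n+1)/2})}^*$, so it does not affect the sign pattern.

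The main obstacle, and the only place requiring genuine care, is tracking the signs correctly through the three operations — dualization ${}^*$, conjugation $\check{}$, and the $E_{-(n+1)/2}$ twist — simultaneously, since each interacts with the $b$-grading. In particular I must verify that the composite of adjoint-plus-conjugate reverses the arguments of $\Delta_k$ \emph{and} inserts precisely the factor $(-1)^k$, with no additional global sign coming from the half-integer twist by $E_{-(n+1)/2}$; the cleanest check is to evaluate on generators and compare both sides of $\Delta(y)(x)$ after applying $\check\Delta^*$, exactly as was done in the proofs of properties (i) and (ii). Once the coefficient identity $\Delta_k(x,y) = (-1)^k\Delta_k(y,x)$ is seen to be equivalent to $\Phi = \check\Phi^*$, the lemma follows, since the former is literally Saito's condition (iv) that each $K^{(k)}$ be $(-1)^k$-symmetric.
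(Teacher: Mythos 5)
Your proposal is correct and follows essentially the same route as the paper: both translate the hermitian condition $\Phi=\check{\Phi}^*$ for $\Delta\otimes_{(a,b)}Id_{E_{-(n+1)/2}}$ into the coefficient identity $\Delta_k(x,y)=(-1)^k\Delta_k(y,x)$ by observing that the conjugation/identification $\check{E_0}\simeq E_0$ replaces $b^k$ by $(-b)^k$ while the $E_{-(n+1)/2}$ twist contributes no extra sign. The paper's own proof is just a terser statement of these same two equivalences.
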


\begin{proof}
  $\Delta \otimesab Id_{E_{-(n+1)/2}}$ is self-adjoint iff we have:
  \begin{multline*}
  \Delta \otimesab Id_{E_{-(n+1)/2}} \left( y \otimes e_{-(n+1)/2}
  \right)\left( x \otimes e_{-(n+1)/2} \right) = \sum_k S_kb^ke_0
  \Leftrightarrow \\
  \Delta \otimesab Id_{E_{-(n+1)/2}} \left( x \otimes e_{-(n+1)/2}
  \right)\left( y \otimes e_{-(n+1)/2} \right) = \sum_k S_k(-b)^ke_0.
\end{multline*}
for all $x$ and~$y\in E$. On the other hand we have:
\begin{multline*}
\Delta \otimesab Id_{E_{-(n+1)/2}} \left( y \otimes e_{-(n+1)/2}
  \right)\left( x \otimes e_{-(n+1)/2} \right) = \sum_k S_kb^ke_0
  \Leftrightarrow \\
  \Delta(y)(x) = \sum_k S_kb^ke_{n+1}.
\end{multline*}
\end{proof}

By combining the previous equivalence with the results on the existence of hermitian forms, we can extend Belgrade's
result:

\begin{theorem}
  Let $E$ be a regular {\ab} associated to a holomorphic function from
  $\CC^{n+1}$ to $\CC$ with an isolated singularity. Then there exists an
  isomorphism $\Phi: E \ra \Ead \otimesab E_{n+1}$ with
  \[
  \Phi(y)(x) = (n+1)!\sum_k \Phi_k(x,y)b^ke_{n+1},
  \]
  for all $x$ and~$y$ such that the sequence of $\CC$-bilinear forms
  $\Phi_k$ satisfies all four properties of Saito's ``higher residue
  pairings''.
\end{theorem}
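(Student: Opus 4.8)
The plan is to start from Belgrade's isomorphism $\Delta\colon E\ra\Ead\otimesab E_{n+1}$ furnished by Theorem~\ref{letheoreme}. The computations of the previous subsections already show that the associated forms $\Delta_k$ satisfy Saito's axioms (i), (ii) and (iii); only the $(-1)^k$-symmetry (iv) is still in doubt. By the preceding lemma, property (iv) holds for an isomorphism $\Phi\colon E\ra\Ead\otimesab E_{n+1}$ precisely when $\Phi\otimesab Id_{E_{-(n+1)/2}}$ is hermitian, which by the identification of self-duality isomorphisms with hermitian forms described earlier amounts to the relation $\check\Phi^*=\Phi$. Thus the entire problem reduces to replacing $\Delta$ by a \emph{hermitian} isomorphism without spoiling properties (i)--(iii).

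First I would form the hermitian symmetrisation
\[
\Phi=\frac12\bigl(\Delta+\check\Delta^*\bigr),
\]
which is again an $(a,b)$-morphism $E\ra\Ead\otimesab E_{n+1}$ and satisfies $\check\Phi^*=\Phi$ by construction (using that the adjoint--conjugate operation is involutive, so that the adjoint of $\check\Delta^*$ returns $\Delta$). It then remains to check three things in turn: that $\Phi$ is still an isomorphism, that the $\Phi_k$ still obey (i)--(iii), and finally to read off (iv) from the lemma.

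Properties (i) and (ii) are formal: their proofs use only the $\CCb$-linearity of $\Phi(y)$, the defining relation of the adjoint, and the $a$-structure of $E_{n+1}$, so they hold for \emph{every} $(a,b)$-isomorphism $E\ra\Ead\otimesab E_{n+1}$, in particular for $\Phi$. The decisive point is that Grothendieck's residue is a \emph{symmetric} pairing; since $\Delta_0$ induces it, $\Delta_0$ is symmetric, and therefore the level-$0$ part is unaffected by symmetrisation,
\[
\Phi_0(x,y)=\frac12\bigl(\Delta_0(x,y)+\Delta_0(y,x)\bigr)=\Delta_0(x,y),
\]
so $\Phi_0$ again induces Grothendieck's residue, which is property (iii). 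Moreover the residue pairing on $D/bD$ is nondegenerate, hence the reduction of $\Phi$ modulo $b$ is an isomorphism; as $\Phi$ is a map between free $\CCb$-modules of the same finite rank, Nakayama's lemma forces $\Phi$ itself to be an isomorphism. Being hermitian, $\Phi$ then satisfies (iv) by the preceding lemma, and all four axioms hold simultaneously.

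The one genuinely delicate point is this last step: one must guarantee that it is the hermitian symmetrisation---and not the anti-hermitian combination $\frac12(\Delta-\check\Delta^*)$---that is nondegenerate, while at the same time ensuring that passing to it does not disturb property (iii). Both difficulties dissolve under the same observation, the symmetry of Grothendieck's residue: the symmetrisation is invisible at level $0$, so the symmetric nondegenerate residue is preserved verbatim and furnishes the nondegeneracy, whereas the anti-hermitian part, being antisymmetric, must vanish modulo $b$ and so could never be nondegenerate. This is precisely the geometric input that selects, among the two alternatives offered by Theorem~\ref{symmetric_or_antisymmetric}, the hermitian one.
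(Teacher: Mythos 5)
Your proposal is correct and follows essentially the same route as the paper: symmetrise Belgrade's isomorphism to $\Phi=\tfrac12(\Delta+\check\Delta^*\otimesab Id_{E_{n+1}})$, observe that (i)--(ii) are formal, that (iii) survives because Grothendieck's residue is symmetric so $\Phi_0=\Delta_0$, and that (iv) follows from hermitian-ness via the preceding lemma. The only (harmless) divergence is in proving that $\Phi$ is an isomorphism: you invoke Nakayama's lemma for free $\CCb$-modules of equal rank using nondegeneracy of the residue modulo $b$, whereas the paper composes with $(\Delta\otimesab Id)^{-1}$ and applies Proposition~\ref{injective_bijective} to reduce to injectivity --- both hinge on the same input, the nondegeneracy of $\Delta_0$ on $E/bE$.
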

  
\begin{proof}
  Let $\Delta$ be Belgrade's isomorphism and $\Delta_k$ defined as at the
  beginning of this section. Consider the isomorphism
  \[
  \check{\Delta}^* \otimesab Id_{E_{n+1}}: E \ra \Ead \otimesab E_{n+1}
  \]
  and let $\Phi = \left( \Delta + \check{\Delta}^* \otimesab
  Id_{E_{n+1}}\right)/2$.

  It is easy to see that the $\Phi_k$ satisfy properties (i) and~(ii).  Moreover since $\Delta_0$ is symmetric
  (Grothendieck's residue) and $\check{\Delta}^* \otimesab Id_{E_{n+1}}$ induces the transposed of $\Delta_0$ on $E/bE$,
  we have
  \[
  \Phi_0 = \left(\Delta_0 + {}^t\Delta_0\right)/2 = \Delta_0.
  \]

  We have also
  \begin{multline*}
  \Phi\otimesab Id_{E_{-(n+1)/2}} = \widecheck{\left(\Phi \otimesab
  Id_{E_{-(n+1)}}\right)}^* = \check{\Phi}^* \otimesab Id_{E_{(n+1)/2}} =\\ \Phi
  \otimesab Id_{E_{-(n+1)/2}},
\end{multline*}
  therefore the $\Phi_k$ satisfy Saito's property (iv).

  We just have to show that $\Phi\otimesab Id_{E_{-(n+1)/2}}$ is an isomorphism. Since there exists an isomorphism
  between $E \otimesab E_{-(n+1)/2}$ and its adjoint  $\Delta\otimesab Id_{E_{-(n+1)/2}}$ , we can apply
  proposition~\ref{injective_bijective} and reduce ourselves to prove the injectivity of $\Phi\otimesab
  Id_{E_{-(n+1)/2}}$. But if $\Phi\otimesab Id_{E_{-(n+1)/2}}$ were not injective $\Phi$ would induce a degenerate form
  on $E/bE$, which is absurd.
\end{proof}

The existence of a hermitian form on $E\otimesab E_{-(n+1)/2}$ gives us an interesting restriction on the kind of {\ab}
associated with Brieskorn lattices:

\begin{corollary}
  Let $E$ be a regular {\ab} associated to a holomorphic function from $\CC^{n+1}$ to $\CC$ with an isolated
  singularity. Then $E\otimesab E_{-(n+1)/2}$ is a hermitian {\ab}.
\end{corollary}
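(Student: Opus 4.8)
The plan is to recognize that this corollary is little more than a reformulation of the preceding theorem in the language of hermitian {\ab}s, so that the work reduces to matching up the two vocabularies. Recall from the closing remarks of Section~2 that prescribing an isomorphism of $E$ with its $(n+1)$-dual $\Ead \otimesab E_{n+1}$ is the same datum as prescribing a sesquilinear form on $E \otimesab E_{-(n+1)/2}$: by properties (iii) and~(iv) of Lemma~\ref{lem:product_prop} one has the identification
\[
\widecheck{\left( E \otimesab E_{-(n+1)/2} \right)}^* \simeq \Ead \otimesab E_{(n+1)/2},
\]
and tensoring with the invertible elementary module $E_{-(n+1)/2}$ sends an isomorphism $\Phi: E \ra \Ead \otimesab E_{n+1}$ to a morphism $\Phi \otimesab Id_{E_{-(n+1)/2}}$ landing in exactly this adjoint.

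First I would take the isomorphism $\Phi$ produced by the preceding theorem, whose associated forms $\Phi_k$ satisfy all four of Saito's axioms. By the lemma stated immediately before that theorem, the validity of property~(iv) for the $\Phi_k$ is precisely equivalent to $\Phi \otimesab Id_{E_{-(n+1)/2}}$ being hermitian. Next I would note that $\Phi \otimesab Id_{E_{-(n+1)/2}}$ is an isomorphism onto the adjoint $\Ead \otimesab E_{(n+1)/2}$: this is automatic, since $\Phi$ is an isomorphism and tensoring a given isomorphism with $Id_{E_{-(n+1)/2}}$ again yields an isomorphism, with inverse $\Phi^{-1} \otimesab Id_{E_{-(n+1)/2}}$. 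Being an isomorphism, the hermitian form represented by $\Phi \otimesab Id_{E_{-(n+1)/2}}$ is nondegenerate.

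Putting these together, $E \otimesab E_{-(n+1)/2}$ carries a nondegenerate hermitian form, and by Remark~\ref{rmk:forcibly_self-adjoint} such a form already forces the module to be self-adjoint; hence $E \otimesab E_{-(n+1)/2}$ is by definition a hermitian {\ab}. I expect no genuine obstacle, since all the analytic content — the identification of $\Delta_0$ with Grothendieck's residue and the symmetrization of Belgrade's isomorphism — has already been discharged inside the theorem. The only point requiring a little care is the bookkeeping that identifies the target $\Ead \otimesab E_{(n+1)/2}$ of $\Phi \otimesab Id_{E_{-(n+1)/2}}$ with the true adjoint of $E \otimesab E_{-(n+1)/2}$, and this is supplied directly by Lemma~\ref{lem:product_prop}.
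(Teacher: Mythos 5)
Your proposal is correct and follows exactly the route the paper intends: the corollary is an immediate consequence of the preceding theorem, since the isomorphism $\Phi$ constructed there yields (via the lemma equating Saito's property~(iv) with hermitianity of $\Phi\otimes_{(a,b)} Id_{E_{-(n+1)/2}}$) a nondegenerate hermitian form on $E\otimes_{(a,b)} E_{-(n+1)/2}$. Your extra bookkeeping about the identification of the target with the adjoint and about self-adjointness being forced is sound and matches the identifications already set up at the end of Section~2.
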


\section{Acknowledgements}

I would like to thank Daniel \textsc{Barlet} for his guidance during my
Ph.D.\ thesis, Michel \textsc{Meo} for his help on the complex analysis
topics.

\nocite{barlet1}
\nocite{barlet2}
\nocite{belgrade}
\nocite{brieskorn}
\nocite{deligne}
\nocite{saito}
\nocite{agv}
\nocite{geandier}
\bibliographystyle{alpha}
\bibliography{hermitian}
\end{document}